\newtheorem{theorem}{Theorem}
\newtheorem{lemma}{Lemma}
\newtheorem{prop}{Proposition}
\newtheorem{corollary}{Corollary}
\title{Completely regular codes in Johnson and Grassmann graphs with small covering radii}
\author{I. Yu. Mogilnykh %
\thanks{\\ 
\noindent E-mail address: \texttt{ivmog@math.nsc.ru}}}
\affil{Sobolev Institute of Mathematics\\Tomsk State University\\
 Novosibirsk State University}
\begin{document}

\maketitle




\begin{abstract}
Let ${\cal L}$ be a Desarguesian 2-spread in the Grassmann graph $J_q(n,2)$.
We prove that the collection of the $4$-subspaces, which do not contain subspaces from ${\cal L}$ is a completely regular code  
in $J_q(n,4)$. Similarly, we construct a completely regular code in the
 Johnson graph $J(n,6)$ from the Steiner quadruple system of the extended Hamming code. We  obtain several new completely regular codes with covering radius $1$ in
the Grassmann graph $J_2(6,3)$ using binary linear programming.
\end{abstract}

\noindent{\bf Keywords:}
Completely regular code, Desarguesian spread, q-ary design, Steiner quadruple system,
Johnson scheme, Grassmann scheme, eigenvalue approach, binary linear programming



\section{Introduction}

The notion of a completely regular code was introduced by Delsarte in \cite{Del} as a generalization of a perfect code. It is known that all perfect codes in  the Grassmann graphs \cite{Chi} are trivial and their nonexistence 
in Johnson graphs is proven for a large number of finite cases \cite{Gord}, \cite{Etz}. Therefore the completely regular codes in these graphs are of interest as they are related to designs and some geometrical objects.

During the years, researchers used several different names for the completely regular codes with covering radius $1$.
These objects could be equivalently defined in terms of: equitable $2$-partitions \cite{MogVal}, \cite{Vor},  perfect $2$-colorings \cite{FDF}, \cite{AvgMog}, \cite{Mog}, \cite{KTV},  \cite{Met}, intriguing sets \cite{DeBruSuz} and others.

We refer to Ph.D. thesis of Martin \cite{MarP} for an introduction on completely regular codes in Johnson graphs and a survey of Borges, Rifa and Zinoviev \cite{Rifa} on a recent progress in the study of completely regular codes in Hamming and Johnson graphs.

In \cite{Del} Delsarte noted interrelations of the completely regular codes in the Hamming and the Johnson graphs with orthogonal arrays and $t$-designs respectively.
In order to emphasize this connection Martin \cite{Mar} suggested the term "a completely regular design of strength $t$". He studied several well-known classes of designs from the point of view of complete regularity and showed that $(k-1)-(n,k,\lambda)$-designs are completely regular. This also holds for $q$-ary designs, therefore $2$-spreads and $2$-ary Steiner triple system  \cite{Brau2} are completely regular. In \cite{AvgMog} it was shown that if $D$ is a $(k-1)-(n,k,1)$-design, then the code of the $(k+1)$-subsets that does not contain any block of $D$ is completely regular in the Johnson graph $J(n,k+1)$. The integer necessary conditions for the existence of $t$-designs were exploited for showing the nonexistence of constant weight perfect codes \cite{Etz}, \cite{Gord} and completely regular codes with covering radius $1$ \cite{Mog}.

The completely regular designs of strength $0$ in the Hamming and the Johnson graphs were characterized by Meyerowitz in \cite{Mey} and have a rather simple structure. The  completely regular codes
of zero strength  in the strongly regular Grassmann graphs are known as the Cameron-Liebler line classes \cite{CL}. 
Contrary to the ordinary designs \cite{Mey} the complete classification of these objects is done only in some particular cases. Several approaches for studying completely regular codes in Johnson graphs  were applied to Cameron-Liebler line classes in \cite{MogG}.

The completely regular codes  of strength $0$ with covering radius $1$ in Grassmann graph $J_q(n,k)$ for $k\geq 3$ could be considered as a generalization of Cameron-Liebler line classes.
Somewhat trivial examples of these codes are the subspaces containing a point;  the subspaces contained in a hyperplane; the subspaces contained in a hyperplane or containing a point for a non-incident point-hyperplane pair. There are not known any other examples of such codes in $J_q(n,k)$ for $k \geq 3$.  

The completely regular codes in the Johnson graphs having strength $1$  were characterized in the following cases: the minimum distance at least $3$ by Martin in \cite{MarS1} and covering radius $1$ recently by Vorob'ev \cite{Vor}.   De Winter and Metsch in \cite{Met} considered completely regular codes with covering radius 1 and strength $1$ in the Grassmann graphs of diameter $3$. They found two new series of examples of such codes: the $3$-subspaces that do not contain a space of $2$-spread and the code arising from symplectic polar space. The first series is similar to a construction from \cite{AvgMog} for completely regular codes in the Johnson graphs $J(n,4)$ and $J(n,5)$ from Steiner triple and quadruples systems.




 In Section $2$ we give basic definitions and review the theory developed by Delsarte and Martin in the q-analog case.
In Section $3$ we consider the completely regular codes  with covering radii $2$ in regular graphs.
We show that these sufficient condition for existence of such codes could be formulated in terms of certain eigenvectors of these graphs.
In section $4$ we discuss a spectral property of the inclusion matrix of $t$-subspaces vs $k$-subspaces. We show that the code of $(k+1)$-subspaces not containing subspaces of $(k-1)-(n,k,1)_q$-design is completely regular in the Grassmann graph of $(k+1)$-subspaces. This generalizes a series of De Winter and Metsch \cite{Met} and \cite{AvgMog}.
In Sections $5$ and $6$ we go further by applying this idea to the most symmetric designs, i.e. when 
a $(k-1)-(n,k,1)_q$-design is the Desarguesian $2$-spread or the Steiner quadruple system of extended Hamming code. In this case the code of $(k+2)$-subspaces (subsets), which do not contain  subspaces (subsets) of $(k-1)-(n,k,1)_q$-design is completely regular in the Grassmann (Johnson) graph. 

The linear programming approach is a popular method for construction and classification of codes.
Binary versions of linear programming earlier showed promising results for settling the nonexistence of completely regular codes \cite{Bam} as well as finding these objects \cite{KMV}.  In Section $7$ we obtain several new completely regular designs of strength $1$ and covering radius $1$  in the Grassmann graph $G_2(6,3)$ by binary integer programming with a prescribed subgroup of its automorphism group. We outline the known results on the completely regular codes with $\rho=1$  in this graph in a parameter table.

\section{Definitions and Basic theory}

\subsection{The eigenspaces of the Johnson and Grassmann graphs}\label{secEs}

In what follows we abbreviate $k$-element subset and $k$-dimensional subspace to  $k$-subset and $k$-subspace respectively. 
The vertices of the Johnson graph $J(n,k)$ are $k$-subsets of the set $\{1,\ldots, n\}$ and the edges are pairs of subsets meeting in a $(k-1)$-subset.
The vertices of the Grassmann graph $J_q(n,k)$ are $k$-subspaces of $F_q^n$ and the edges are pairs of subspaces meeting in a $(k-1)$-subspace. These are well-known series of distance-regular graphs. Below we consider $k\leq n/2$.
We also denote the Johnson graph $J(n,k)$ by $J_1(n,k)$ to emphasize that a result holds for Johnson and Grassmann graphs
simultaneously. We use the notations
 $[^n_k]_q$ for $q$-binomial coefficient and $[^n_k]_1$ for its limit value, i.e. the ordinary binomial coefficient.

A  vector $v$ is called an {\it eigenvector} of a graph $\Gamma$ with eigenvalue $\theta$  if it is an eigenvector of the adjacency matrix of $\Gamma$ with eigenvalue $\theta$. 
The following representation for the eigenspaces of the Johnson and Grassmann graphs could be found in \cite{Nor}, see also \cite[Section 4.2]{Del}. We arrange the eigenvalues of $J_q(n,k)$, $q\geq 1$ in descending order starting from zeroeth and denote them by 
$\theta_{i,q}(n,k)$, $i\in \{0,\ldots, k\}$. 
 For $i$-subspace ($i$-subset if $q=1$) $X$  let us  consider the characteristic vectors of all $k$-subspaces of $F_q^n$ ($k$-subsets) containing $X$. Let $T_i(k)$ be the linear span of these vectors where $X$ runs through all $i$-subspaces ($i$-subsets) of $F_q^n$ (the set $\{1,\ldots,n\}$).  
\begin{theorem} \cite[Theorem 4.16]{Nor} \label{TheoE}
For any $i, 0\leq i\leq k$, $U_i=T_i(k)\cap T_{i-1}^{\perp}(k)$ is the eigenspace of $J_q(n,k)$, $q\geq 1$,  with eigenvalue 
$$\theta_{i,q}(n,k)=q^{i+1}[^{n-k-i}_{1}]_q[^{k-i}_{1}]_q-[^i_1]_q.$$ 
\end{theorem}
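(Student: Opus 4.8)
The plan is to read off the decomposition from the nested family of inclusion spaces. Write $V$ for the space spanned by the characteristic vectors $e_Y$ of the $k$-subspaces $Y$, and recall that $T_i(k)$ is spanned by the lifted vectors $\chi^{(k)}_X=\sum_{Y\supseteq X}e_Y$ as $X$ runs over the $i$-subspaces. First I would check that the family $T_0(k)\subseteq T_1(k)\subseteq\cdots\subseteq T_k(k)=V$ is an increasing flag. Indeed, for an $(i-1)$-subspace $X$ the sum $\sum_{Z\supseteq X,\ \dim Z=i}\chi^{(k)}_Z$ assigns to each $k$-subspace $Y\supseteq X$ the number of $i$-subspaces lying between $X$ and $Y$, namely $[^{k-i+1}_{1}]_q$, and assigns $0$ otherwise; hence this sum equals $[^{k-i+1}_{1}]_q\,\chi^{(k)}_X$, which shows $\chi^{(k)}_X\in T_i(k)$ and therefore $T_{i-1}(k)\subseteq T_i(k)$. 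Since the only $k$-subspace containing a given $k$-subspace is itself, we have $T_k(k)=V$. Consequently the spaces $U_i=T_i(k)\cap T_{i-1}^{\perp}(k)$ are pairwise orthogonal and give an orthogonal decomposition $V=\bigoplus_{i=0}^{k}U_i$; they are all nonzero in the range $i\le k\le n/2$ because the inclusion matrices have full rank, which is classical.

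Next I would prove that the adjacency operator $A$ stabilises the flag. Fix an $i$-subspace $X$. The coefficient of $e_{Y'}$ in $A\chi^{(k)}_X$ is the number $N(X,Y')$ of $k$-subspaces $Y$ with $X\subseteq Y$ and $\dim(Y\cap Y')=k-1$; by transitivity of $GL_n(q)$ on pairs of subspaces with a prescribed intersection dimension, $N(X,Y')$ depends only on $m=\dim(X\cap Y')$, say $N(X,Y')=c_m$. Setting $P_j=\sum_{Z\subseteq X,\ \dim Z=j}\chi^{(k)}_Z\in T_j(k)$, the coefficient of $e_{Y'}$ in $P_j$ equals $[^m_j]_q$ with $m=\dim(X\cap Y')$, and these Gaussian binomials are triangular in $m$; hence the indicator of $\{\,\dim(X\cap Y')=m\,\}$ lies in the span of $P_0,\dots,P_i$. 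Therefore $A\chi^{(k)}_X\in\mathrm{span}\{P_0,\dots,P_i\}\subseteq T_i(k)$, so $A\,T_i(k)\subseteq T_i(k)$, and since $A$ is symmetric it also stabilises $T_{i-1}^{\perp}(k)$ and hence $U_i$.

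To pin down the eigenvalue I would pass to the quotient. Because $P_i=\chi^{(k)}_X$ while $P_0,\dots,P_{i-1}\in T_{i-1}(k)$, the expansion of $A\chi^{(k)}_X$ in the $P_j$ gives $A\chi^{(k)}_X\equiv a_i\,\chi^{(k)}_X\pmod{T_{i-1}(k)}$, where $a_i$ is its leading coefficient. This scalar is independent of the chosen $i$-subspace $X$, so $A$ acts on $U_i\cong T_i(k)/T_{i-1}(k)$ as multiplication by $a_i$. It then remains to evaluate $a_i$ from the triangular relations $c_m=\sum_{j\le m}a_j[^m_j]_q$: one computes the neighbour counts $c_m$ by Gaussian-binomial bookkeeping—for example $c_i=q\,[^{k-i}_{1}]_q[^{n-k}_{1}]_q$ in the case $X\subseteq Y'$—and inverts the system to obtain the closed form $a_i=\theta_{i,q}(n,k)=q^{i+1}[^{n-k-i}_{1}]_q[^{k-i}_{1}]_q-[^i_1]_q$.

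Finally, since these values are pairwise distinct and strictly decreasing in $i$ on the range $0\le i\le k\le n/2$ (as one verifies directly from the formula), the orthogonal decomposition $V=\bigoplus_i U_i$ into $A$-invariant subspaces on which $A$ acts by distinct scalars must be the spectral decomposition of $A$; hence $U_i$ is exactly the eigenspace for $\theta_{i,q}(n,k)$. I expect the decisive difficulty to be the eigenvalue computation of the third paragraph: determining $c_m$ for general $m=\dim(X\cap Y')$ requires a careful analysis of how a $(k-1)$-dimensional intersection $Y\cap Y'$ can meet $X$, and producing the precise shape ``leading term minus $[^i_1]_q$'' out of the triangular inversion is where the $q$-binomial identities must be managed with care; the structural steps before it are formal consequences of the nested-inclusion framework.
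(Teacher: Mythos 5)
You should know at the outset that the paper contains no proof of this statement: Theorem \ref{TheoE} is imported from \cite[Theorem 4.16]{Nor} (see also \cite[Section 4.2]{Del}), so the only comparison possible is with that literature, and your outline is in fact the classical argument used there. Its structural part is correct: the inclusion $T_{i-1}(k)\subseteq T_i(k)$ via the identity $\sum_{Z\supseteq X,\,\dim Z=i}\chi^{(k)}_Z=[^{k-i+1}_{1}]_q\,\chi^{(k)}_X$; the invariance $A\,T_i(k)\subseteq T_i(k)$ via transitivity (of $GL_n(q)$, or of $S_n$ when $q=1$) on pairs with prescribed intersection dimension together with the unitriangularity of $([^m_j]_q)_{m,j}$; the conclusion that $A$ acts on $U_i$ by a scalar $a_i$ (since $Au-a_iu\in U_i\cap T_{i-1}(k)=\{0\}$ for $u\in U_i$); the nontriviality of the $U_i$ from Kantor's full-rank theorem \cite{Kant}; and the closing remark that an orthogonal decomposition into $A$-invariant summands with pairwise distinct scalars must be the spectral decomposition.

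The genuine gap is that the theorem's actual content --- the closed form $\theta_{i,q}(n,k)=q^{i+1}[^{n-k-i}_{1}]_q[^{k-i}_{1}]_q-[^i_1]_q$ --- is never derived. You compute a single count, $c_i=q[^{k-i}_{1}]_q[^{n-k}_{1}]_q$ (which is correct), and defer the remaining counts $c_m$, $m<i$, and the inversion $a_i=\sum_{m\le i}(-1)^{i-m}q^{\binom{i-m}{2}}[^i_m]_q\,c_m$ to ``bookkeeping''; but this is where essentially all of the work lies, and without it you have shown only that each $U_i$ is an eigenspace of \emph{some} eigenvalue, not of the stated one. Moreover, the distinctness of the scalars, which your last step requires, is itself verified from the unproved formula. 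There is also a trap in your triangular relations $c_m=\sum_{j\le m}a_j[^m_j]_q$: the $a_j$ with $j<i$ there must be the lower coefficients of the expansion of $A\chi^{(k)}_X$ for the fixed $i$-subspace $X$, and these are \emph{not} the eigenvalues of the lower layers $U_j$. Already for $q=1$ and $\dim X=1$, the coefficient of $P_0={\bf 1}$ in $A\chi^{(k)}_X$ is $c_0=k$, whereas $\theta_{0}(n,k)=k(n-k)$; so under the natural reading of $a_j$ as eigenvalues the relations are false. Only the leading coefficient of each expansion is an eigenvalue, so the system has to be set up with fresh unknowns and inverted; carrying that inversion through to the stated closed form is the missing core of the proof.
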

Below we omit the index $q$ in $\theta_{i,q}(n,k)$ when we speak of the eigenvalues of the Johnson graphs.

\subsection{Completely regular codes and $q$-ary designs}

Let $C$ be a code in a regular graph $\Gamma$. 
A vertex $x$ is in $C_i$ if the minimum of the distances between $x$ and the vertices of $C$ is $i$.
The maximum of these distances is called {\it the covering radius} of $C$ and is denoted by $\rho$. The  {\it distance partition} of the vertices of $\Gamma$ with respect to $C_0=C$ is  $\{C_i:i\in \{0,\ldots,\rho\}\}$.

A code $C$ is called {\it completely regular} \cite{Neu} if there are numbers
$\alpha_0,\ldots,\alpha_{\rho}$,  $\beta_0,\ldots,\beta_{\rho-1}$, $\gamma_1,\ldots,\gamma_{\rho}$
such that any vertex of $C_i$ is adjacent to exactly $\alpha_i$, $\beta_i$ and $\gamma_i$ vertices of $C_{i-1}$, $C_{i}$ and $C_{i+1}$
respectively. Note that  $\alpha_0,\ldots,\alpha_{\rho}$ can be found from the remaining numbers and the valency of the graph. The set  
  $\{\beta_0,\ldots,\beta_{\rho-1}; \gamma_1,\ldots,\gamma_{\rho}\}$ is called the {\it intersection array} of the completely regular code $C$.

For a completely regular code $C$ consider the $(\rho+1)\times(\rho+1)$ {\it matrix} $A$  
such that $A_{i,j}$ equals the number of vertices of $C_j$ adjacent to a fixed vertex of $C_i$.
The eigenvalues of the matrix $A$ are called the {\it eigenvalues of the completely regular code} $C$.

\begin{theorem}\label{TheoL}\cite[Theorem 4.5]{CDZ} (Lloyd's theorem)
The eigenvalues of a completely regular code in a graph $\Gamma$ are eigenvalues of $\Gamma$. 
\end{theorem}
The eigenvalues of a completely regular code with covering radius $1$ are easy to find, see e.g. \cite[Proposition 1]{MogVal}.

\begin{prop}\label{Prop1} Let $C$ be a completely regular code in a $m$-regular graph $\Gamma$ with $\rho=1$ and intersection array $\{\beta_0;\gamma_1\}$.
Then the size of $C$ is $|V(\Gamma)|\gamma_1/(\gamma_1+\beta_0)$ and the eigenvalues of $C$ are $m$ and $m-\beta_0-\gamma_1$.
\end{prop}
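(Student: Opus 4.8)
The plan is to read off both assertions from the $2\times 2$ matrix $A$ of the distance partition, supplemented by one edge-counting identity. Since $\rho=1$, the partition is just $\{C_0,C_1\}$ with $C_0=C$ and $C_1=V(\Gamma)\setminus C$, so in the notation above $\beta_0$ is the number of neighbours in $C_1$ of a vertex of $C_0$ and $\gamma_1$ is the number of neighbours in $C_0$ of a vertex of $C_1$. The first thing I would record is that $m$-regularity pins down the remaining parameters: a vertex of $C_0$ has $\alpha_0=m-\beta_0$ neighbours inside $C_0$, and a vertex of $C_1$ has $\alpha_1=m-\gamma_1$ neighbours inside $C_1$. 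Hence the matrix $A$ from the definition of the eigenvalues of $C$ is
\[
A=\begin{pmatrix}\alpha_0 & \beta_0\\ \gamma_1 & \alpha_1\end{pmatrix}
 =\begin{pmatrix}m-\beta_0 & \beta_0\\ \gamma_1 & m-\gamma_1\end{pmatrix},
\]
and in particular both of its rows sum to $m$.

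For the spectrum I would exploit exactly this row-sum property: the all-ones vector is an eigenvector of $A$ with eigenvalue $m$, which is the first claimed eigenvalue. The second one then drops out of the trace, since $\operatorname{tr}A=\alpha_0+\alpha_1=2m-\beta_0-\gamma_1$ and one eigenvalue is already $m$, forcing the other to be $2m-\beta_0-\gamma_1-m=m-\beta_0-\gamma_1$. As an independent check one can compute $\det A=(m-\beta_0)(m-\gamma_1)-\beta_0\gamma_1=m(m-\beta_0-\gamma_1)$, whose value is indeed the product $m\cdot(m-\beta_0-\gamma_1)$ of the two eigenvalues; this confirms the second assertion of the proposition.

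For the cardinality I would count in two ways the edges of $\Gamma$ joining $C_0$ to $C_1$. Looking from the $C_0$ side, each of the $|C|$ vertices of $C_0$ is incident to $\beta_0$ such edges, giving $|C|\beta_0$ in total; looking from the $C_1$ side, each of the $|V(\Gamma)|-|C|$ vertices of $C_1$ is incident to $\gamma_1$ such edges, giving $(|V(\Gamma)|-|C|)\gamma_1$. Equating the two counts yields $|C|\beta_0=(|V(\Gamma)|-|C|)\gamma_1$, and solving for $|C|$ gives $|C|=|V(\Gamma)|\,\gamma_1/(\beta_0+\gamma_1)$, as required.

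There is no genuine obstacle in this argument; it is purely linear-algebraic bookkeeping for a $2\times 2$ matrix together with an incidence count. The only point that demands a little care is the correct identification of the entries of $A$ with the intersection-array parameters $\beta_0,\gamma_1$ and the use of regularity to eliminate the diagonal entries $\alpha_0,\alpha_1$; once $A$ is written in the displayed form, both statements are immediate.
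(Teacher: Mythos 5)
Your proof is correct. The paper gives no proof of this proposition itself --- it simply cites \cite{MogVal} --- and your argument (the quotient matrix $A$ with constant row sum $m$ giving the eigenvalue $m$, the trace giving the second eigenvalue $m-\beta_0-\gamma_1$, and double counting of the edges between $C$ and its complement giving $|C|\beta_0=(|V(\Gamma)|-|C|)\gamma_1$) is precisely the standard argument behind that citation. The one point worth noting is that you correctly resolved the paper's misordered definition of the parameters: the stated index ranges $\beta_0,\ldots,\beta_{\rho-1}$ and $\gamma_1,\ldots,\gamma_{\rho}$ force the convention that $\beta_i$ counts neighbours in $C_{i+1}$ and $\gamma_i$ counts neighbours in $C_{i-1}$, which is what you used and what makes the claimed formulas true.
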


In the rest of the section we consider codes in the Johnson or the Grassmann graphs. A collection $D$ of $k$-subspaces ($k$-subsets when $q=1$) of $F_q^n$ (of $\{1,\ldots,n\}$) is a $t-(n,k,\lambda)_q$-{\it design}, if  
any  $t$-subspace  of $F_q^n$ ($t$-element subset of $\{1,\ldots,n\}$) is contained in exactly $\lambda$ elements of $D$.
In throughout of what follows we consider only designs without repeated blocks. The {\it strength} of $D$ is the maximum $t$ such that $D$ is a $t$-design. When $q\geq 2$, a $1-(n,k,1)_q$-design is called a $k$-{\it spread}. 
It is well-known that $k$-spreads exist if and only if $k$ divides $n$.

Let $\chi_C$ be the characteristic vector of a code $C$ in the graph $J_q(n,k)$.
Consider the decomposition of $\chi_C$ over the eigenspaces $U_0,\ldots, U_k$ of $J_q(n,k)$:
\begin{equation}\label{e1}\chi_C=u_0+u_{i_1}+\ldots+u_{i_s}, \end{equation}
where $u_0\in U_0$ and $u_{i_j}\in U_{i_j}$, $j=1,\ldots,s$. The number $s$ in the decomposition (\ref{e1}) is called the {\it dual degree} of the code $C$ \cite{Del}.

We make use of several results that were stated for the completely regular codes in the Johnson graphs by Delsarte \cite{Del} and Martin \cite{Mar}. The arguments of the proofs for the $q$-ary generalization of these results could be obtained by replacing "subset" with "subspace" and follow from the description of the eigenspaces of $J_q(n,k)$ in Section \ref{secEs}.

\begin{theorem}\label{TheoM}
Let $C$ be a code in $J_q(n,k)$, $q\geq 1$ such that the decomposition (\ref{e1}) holds. Then we have the following: 

1. \cite[Theorem 4.2]{Del} The strength of $C$ as $q$-ary design is equal to $min\{i_j:j \in \{1,\ldots,s\}\}-1$.

2. \cite[Theorem 5.13]{Del} If the minimum distance of $C$ is greater or equal to $2s-1$ then $C$ is completely regular.

3. \cite[Theorem 5.10]{Del} The covering radius of $C$ is not greater than $s$.

4. \cite[Corollary 3.4]{Mar} If $C$ is completely regular then its strength is equal to $$min\{i\geq 0:\theta_{i,q}(n,k)\mbox{ is an eigenvalue of } C\}-1.$$    
\end{theorem}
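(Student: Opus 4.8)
The four assertions rest on a single toolkit, so the plan is to isolate what is common and then treat the parts in the order $1,3,2,4$. The only $q$-dependent input is Theorem~\ref{TheoE}: for every $q\ge 1$ it supplies the \emph{same} structural picture, namely the flag $T_0(k)\subseteq T_1(k)\subseteq\cdots\subseteq T_k(k)=\mathbb{R}^{V}$ with $T_i(k)=U_0\oplus\cdots\oplus U_i$ an orthogonal sum of eigenspaces (the eigenvalue $\theta_{0,q}$ being the valency, with eigenspace $U_0=\langle\mathbf{1}\rangle$), together with distance-regularity of $J_q(n,k)$. Delsarte's and Martin's original arguments for $J(n,k)$ use only these two features and never the combinatorial nature of subsets, so they transfer verbatim once ``subset'' is read as ``subspace''. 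Let me fix notation: write $A_0=I,A_1,\ldots,A_k$ for the distance matrices of $\Gamma$ ($A_1$ the adjacency matrix), let $A_r$ act on $U_i$ as the scalar $P_r(i)$, and let $M_{t,k}$ be the inclusion matrix whose rows are the generating vectors $\hat\chi_X$ of $T_t(k)$ (so its row space is $T_t(k)$ and $M_{t,k}\mathbf{1}=N_{t,k}\mathbf{1}$, where $N_{t,k}$ is the constant number of $k$-subspaces through a $t$-subspace).

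For \textbf{part 1}, observe that $C$ is a $t$-design iff the block-count vector $M_{t,k}\chi_C$ is constant, iff $\chi_C-\tfrac{|C|}{|V|}\mathbf{1}\in\ker M_{t,k}$. Since $\ker M_{t,k}=T_t(k)^{\perp}=U_{t+1}\oplus\cdots\oplus U_k$, this happens exactly when $u_1=\cdots=u_t=0$ in the decomposition (\ref{e1}). Taking the largest such $t$ gives strength $=\min\{i_j\}-1$.

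For \textbf{part 3}, I would use the annihilator polynomial $f(z)=\prod_{j=1}^{s}(z-\theta_{i_j})$, of degree $s$. It kills every eigencomponent of $\chi_C$ except the $U_0$-part, so $f(A_1)\chi_C=f(\theta_{0,q})u_0$ is a nonzero multiple of $\mathbf{1}$ (here $f(\theta_{0,q})\ne 0$ as the $\theta_{i,q}$ are distinct). On the other hand distance-regularity gives $f(A_1)=\sum_{r=0}^{s}f_rA_r$, so its $(x,y)$-entry vanishes whenever $d(x,y)>s$. If some $x$ had $d(x,C)>s$, then $(f(A_1)\chi_C)_x=\sum_{c\in C}(f(A_1))_{x,c}=0$, contradicting that this entry is a positive constant; hence $\rho\le s$.

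For \textbf{part 2}, building on $\rho\le s$, take the monic polynomial $\alpha(z)=\prod_{j=1}^{s}(z-\theta_{i_j})$ and expand $\alpha(A_1)=\sum_{r=0}^{s}\alpha_rA_r$ with $\alpha_s\ne 0$ (in a distance-regular graph the top distance matrix always occurs with nonzero coefficient). As above $\alpha(A_1)\chi_C$ is a positive constant $c$, so $\sum_{r=0}^{s}\alpha_r\,b_r(x)=c$ for every $x$, where $b_r(x)=(A_r\chi_C)_x$. If $d\ge 2s-1$ then every ball of radius $s-1$ contains at most one codeword; hence for $x$ with $d(x,C)=i\le s-1$ one gets $b_i(x)=1$ and $b_r(x)=0$ for $i<r\le s-1$, and the linear relation forces $b_s(x)=(c-\alpha_i)/\alpha_s$, a function of $i$ alone (the case $i=s$ is identical). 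Thus the outer distribution $b_\bullet(x)$ depends only on $d(x,C)$. Since $b_r(x)=\sum_{i\in S}P_r(i)(u_i)_x$ and the columns of $P$ indexed by $S$ are independent, each $u_i$ is then constant on the distance cells $C_0,\ldots,C_\rho$; so the $s+1$ independent eigenvectors $u_i$ all lie in $W=\langle\chi_{C_0},\ldots,\chi_{C_\rho}\rangle$, giving $s\le\rho$, hence $s=\rho$ and $W$ is an $A_1$-invariant space spanned by eigenvectors. An $A_1$-invariant cell-space is exactly the statement that the distance partition is equitable, i.e. $C$ is completely regular.

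For \textbf{part 4}, combine part 1 with Lloyd's theorem. For a completely regular code the cell-space $W$ is $A_1$-invariant of dimension $\rho+1$, so $W=\bigoplus_{i\in S'}(W\cap U_i)$ for some index set $S'$ with $|S'|\le\rho+1$, and the eigenvalues of $C$ are exactly $\{\theta_{i,q}:i\in S'\}$ (Theorem~\ref{TheoL} already guarantees they lie among the $\theta_{i,q}$). Since $\chi_C\in W$, its components give $S\subseteq S'$, whence $s\le\rho$; together with $\rho\le s$ from part 3 this yields $s=\rho$ and $S=S'$. Thus the indices of the eigenvalues of $C$ are precisely $S=\{0,i_1,\ldots,i_s\}$; as $\theta_{0,q}$ is always present, the smallest index $i\ge 1$ with $\theta_{i,q}$ an eigenvalue of $C$ is $\min\{i_j\}$, and part 1 identifies the strength as $\min\{i_j\}-1$, which is the asserted formula. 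I expect \textbf{part 2} to be the main obstacle: parts 1, 3, 4 are short once the eigenspace dictionary of Theorem~\ref{TheoE} is in place, whereas part 2 requires simultaneously controlling the minimum-distance combinatorics and upgrading ``the outer distribution is constant on distance cells'' to the genuinely stronger ``the distance partition is equitable''. Crucially, passing from $q=1$ to general $q$ costs nothing in any part, since Theorem~\ref{TheoE} furnishes the identical eigenspace flag for all $q\ge 1$.
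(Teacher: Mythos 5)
Your proposal is correct and follows essentially the route the paper itself points to: the paper gives no proof of this theorem, only citations to Delsarte and Martin together with the remark that their arguments carry over to $q\geq 2$ via the eigenspace description of Theorem \ref{TheoE}, and your write-up is precisely a reconstruction of those cited arguments (inclusion-matrix kernel for the strength, annihilator polynomial for $\rho\leq s$, outer distribution and the $A_1$-invariant span of the distance cells for parts 2 and 4). One small sharpening is needed in part 2: since you only establish that $b_r(x)$, $r\leq s$, is constant on distance cells, you must invoke linear independence of the \emph{truncated} columns $(P_r(i))_{0\leq r\leq s}$, $i\in S$ --- which holds because $P_r(i)=v_r\bigl(\theta_{i,q}(n,k)\bigr)$ for polynomials $v_r$ of degree exactly $r$ and the eigenvalues $\theta_{i,q}(n,k)$, $i\in S$, are distinct --- rather than independence of the full columns of $P$, which by itself does not suffice for the conclusion that each $u_i$ is constant on the cells.
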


The following statement for the Johnson graphs could be found in \cite[Corollary 1]{Mog}.

\begin{corollary}\label{integer} 
Let $C$ be a completely regular code $C$ in $J_q(n,k)$, $q\geq 1$ with covering radius $1$ and intersection array $\{\beta_0;\gamma_1\}$.
Then we have the following: 

1. The code  $C$ is of size $[^{n}_{k}]_q\beta_0/(\gamma_1+\beta_0)$ and its eigenvalues are numbers $[^{n-k}_1]_q[^{k}_1]_q$ and $[^{n-k}_1]_q[^{k}_1]_q-\gamma_1-\beta_0$.

2. The strength of $C$ is $t$ where $$\theta_{t+1,q}(n,k)=[^{n-k}_1]_q[^{k}_1]_q-\gamma_1-\beta_0.$$ Moreover, the numbers $[^{n-i}_{k-i}]_q\beta_0/(\gamma_1+\beta_0)$ are integers for any $i\in \{0,\ldots,t\}$.
\end{corollary}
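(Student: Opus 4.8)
The plan is to deduce both parts from the general spectral description of covering-radius-one completely regular codes, together with the $q$-design structure that the spectrum forces. Part 1 is essentially a specialization of Proposition \ref{Prop1} to $\Gamma=J_q(n,k)$: here $|V(J_q(n,k))|=[^{n}_{k}]_q$ and the valency $m$ is exactly the top eigenvalue $\theta_{0,q}(n,k)$ of Theorem \ref{TheoE} (note $\theta_{0,q}(n,k)$ is obtained from the formula by setting $i=0$, since $[^0_1]_q=0$). Substituting these two data into the size formula and into the eigenvalue list $\{m,\,m-\beta_0-\gamma_1\}$ of Proposition \ref{Prop1} yields at once the claimed size $[^{n}_{k}]_q\beta_0/(\gamma_1+\beta_0)$ and the two eigenvalues $\theta_{0,q}(n,k)$ and $\theta_{0,q}(n,k)-\gamma_1-\beta_0$ of $C$.

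For the strength in Part 2 I would combine Lloyd's theorem (Theorem \ref{TheoL}) with Delsarte's and Martin's results (Theorem \ref{TheoM}). Since $\rho=1$ and $C$ is completely regular, its dual degree is $1$, so the decomposition (\ref{e1}) has the shape $\chi_C=u_0+u_{i_1}$ with a single nontrivial eigenspace $U_{i_1}$. By Lloyd's theorem the corresponding graph eigenvalue $\theta_{i_1,q}(n,k)$ must coincide with the unique nontrivial eigenvalue $\theta_{0,q}(n,k)-\beta_0-\gamma_1$ of $C$ found in Part 1; writing $i_1=t+1$, this is precisely the defining relation $\theta_{t+1,q}(n,k)=[^{n-k}_1]_q[^k_1]_q-\gamma_1-\beta_0$ for $t$. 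Theorem \ref{TheoM}.1 (equivalently Theorem \ref{TheoM}.4) then identifies the strength as $i_1-1=t$.

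For the integrality statement I would use that a code of strength $t$ is, by definition, a $t-(n,k,\lambda)_q$ design, hence an $i$-design for every $i\leq t$. Double counting the incident pairs (an $i$-subspace, a block of $C$ containing it) gives that each $i$-subspace lies in exactly $\lambda_i=|C|[^{k}_{i}]_q/[^{n}_{i}]_q$ blocks. Invoking the $q$-analogue of the identity $[^{n}_{k}]_q[^{k}_{i}]_q=[^{n}_{i}]_q[^{n-i}_{k-i}]_q$ together with the size $|C|=[^{n}_{k}]_q\beta_0/(\gamma_1+\beta_0)$ from Part 1, this simplifies to $\lambda_i=[^{n-i}_{k-i}]_q\beta_0/(\gamma_1+\beta_0)$. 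As $\lambda_i$ counts blocks it is a nonnegative integer, which is exactly the asserted integrality for all $i\in\{0,\ldots,t\}$ (the case $i=0$ recovering integrality of the size itself).

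I expect the only genuinely delicate point to be the bookkeeping in Part 2: one must be certain that $\rho=1$ forces the dual degree to equal $1$ (so there is precisely one nontrivial eigenvalue to match with $\theta_{t+1,q}(n,k)$) and that the trivial eigenvalue $\theta_{0,q}(n,k)$ is correctly excluded when reading off the strength from Theorem \ref{TheoM}. Once this identification is secured, the substitution in Part 1 and the $q$-binomial simplification in the integrality argument are routine.
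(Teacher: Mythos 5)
Your proposal is correct and follows essentially the same route as the paper's proof: Proposition \ref{Prop1} for the size and the two eigenvalues, Martin's result (statement 4 of Theorem \ref{TheoM}) for reading off the strength from the unique nontrivial eigenvalue, and the standard integer necessary conditions for $t$-designs (which your explicit double count and $q$-binomial identity simply spell out) for the integrality claim. One caution: your primary derivation of Part 2 asserts that $\rho=1$ plus complete regularity forces dual degree $1$, which does not follow from statement 3 of Theorem \ref{TheoM} (that gives only $\rho\le s$) and would instead need statement 1 of Theorem \ref{Theo1} (the characteristic vector lies in the span of the all-one vector and a $\theta$-eigenvector); however, your fallback via Theorem \ref{TheoM}.4 bypasses the dual degree entirely and is exactly the paper's argument.
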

\begin{proof}
The eigenvalues of $C$ and the expression $|C|=[^{n}_{k}]_q\beta_0/(\gamma_1+\beta_0)$ follow from Proposition \ref{Prop1} and
the strength of $C$ is by the fourth Statement of Theorem \ref{TheoM}. 
The integer necessary conditions for $t$-design imply that 
for any $i\in \{0,\ldots,t\}$ the number of the subspaces of $C$ containing an $i$-subspace is $[^{n-i}_{k-i}]_q\beta_0/(\gamma_1+\beta_0)$.

\end{proof}

\begin{corollary}\label{Cor1}

1. \cite[Corollary 3.5]{Mar}, \cite[Corollary 8]{AP} For $q\geq 1$ any $(k-1)-(n,k,\lambda)_q$-design is a completely regular code in $J_q(n,k)$ with eigenvalue $\theta_{k,q}(n,k)$ and covering radius $\rho=1$.

2. Any $3$-spread is a completely regular code in the Grassmann graph $J_q(n,3)$ with $\rho=2$. 

\end{corollary}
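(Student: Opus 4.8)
The plan is to derive both parts from the Delsarte--Martin correspondence recorded in Theorem \ref{TheoM}: the strength of a design pins down the smallest index appearing in the eigenspace decomposition (\ref{e1}), which in turn bounds the dual degree $s$; complete regularity then follows from the minimum--distance criterion (part 2 of Theorem \ref{TheoM}), and the covering radius is controlled by part 3 together with an elementary incidence count. For the first part, a $(k-1)-(n,k,\lambda)_q$-design $D$ has strength at least $k-1$, so by part 1 of Theorem \ref{TheoM} we have $\min\{i_j\}-1\ge k-1$, i.e. $\min\{i_j\}\ge k$. Since every index satisfies $i_j\le k$, this forces $\min\{i_j\}=k$ and hence $s=1$, with the single nontrivial component lying in $U_k$; thus the nontrivial eigenvalue of $D$ is $\theta_{k,q}(n,k)$. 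As the minimum distance of any code is at least $1=2s-1$, part 2 of Theorem \ref{TheoM} shows $D$ is completely regular, and part 3 gives $\rho\le s=1$; for a proper design $0<\lambda<[^{n-k+1}_1]_q$ the code is neither empty nor all of $V$, so $\rho=1$.

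For the second part, let $\mathcal{L}$ be a $3$-spread, i.e.\ a $1-(n,3,1)_q$-design (so $3\mid n$ and $n\ge 6$). Its strength is exactly $1$: it is a $1$-design, but not a $2$-design, because two points in a common spread element span a $2$-subspace contained in a block, while two points in different elements span a $2$-subspace contained in no block. Hence $\min\{i_j\}=2$, so $u_1=0$ in (\ref{e1}) and every nontrivial index lies in $\{2,3\}$; in particular $s\le 2$. Distinct spread elements meet only in the zero subspace, so the minimum distance of $\mathcal{L}$ equals $3\ge 2s-1$, and part 2 of Theorem \ref{TheoM} shows $\mathcal{L}$ is completely regular.

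It remains to prove $\rho=2$. The upper bound is immediate: every point of $F_q^n$ lies in a unique element of $\mathcal{L}$, so any $3$-subspace $X$ shares at least a point with some element $S$, giving $\mathrm{dist}(X,S)\le 2$. For the lower bound I would count incidences involving \emph{spread-planes}, by which I mean $2$-subspaces contained in a (necessarily unique) element of $\mathcal{L}$. The key local fact is that $X$ lies within distance $1$ of $\mathcal{L}$ exactly when it contains a spread-plane, and that such an $X$, unless it is itself an element of $\mathcal{L}$, contains precisely one spread-plane: two spread-planes of $X$ would meet in a point of $X$ lying in two elements of $\mathcal{L}$, forcing those elements to coincide. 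Double-counting the pairs (spread-plane $P$, $3$-subspace $\supseteq P$) then gives a closed form for the number of $3$-subspaces at distance $\le 1$, namely $N\bigl(1+(q^2+q+1)([^{n-2}_1]_q-1)\bigr)$, where $N=[^n_1]_q/(q^2+q+1)$ is the number of elements of $\mathcal{L}$. Comparing the leading $q$-powers of this quantity (of order $q^{2n-4}$) with the total $[^n_3]_q$ (of order $q^{3n-9}$) shows it is strictly smaller for every $n\ge 6$, so a $3$-subspace at distance $2$ exists and $\rho=2$.

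The main obstacle is exactly this lower bound $\rho\ge 2$, equivalently $u_3\ne 0$ so that the dual degree is genuinely $2$ rather than $1$. A naive argument only guarantees that a generic $3$-subspace avoids one fixed element, whereas one needs a single $3$-subspace meeting \emph{every} element in at most a point. The incidence count above settles this uniformly for all $3$-spreads, using nothing beyond the pairwise disjointness of the blocks; no special (e.g.\ Desarguesian) structure is required. Once $\rho=2$ is established it also follows that $s=2$ (since $\rho\le s\le 2$), completing the analysis.
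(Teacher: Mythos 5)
Your proposal is correct and follows essentially the same route as the paper: both parts are derived from Statements 1--3 of Theorem \ref{TheoM}, bounding the dual degree $s$ via the design strength, invoking the minimum-distance criterion $d\geq 2s-1$ for complete regularity (trivially for the design, and via the distance-$3$ separation of spread elements for the $3$-spread), and using Statement 3 to control the covering radius. The only substantive difference is that you actually prove $\rho=2$ for the $3$-spread (upper bound because the spread covers every point, lower bound by double-counting spread-planes), whereas the paper simply asserts that the covering radius of a $3$-spread is $2$; your count $N\bigl(1+(q^2+q+1)([^{n-2}_1]_q-1)\bigr)$ is correct, but the final comparison should be done exactly rather than by ``leading $q$-powers,'' since for $n=6$, $q=2$ the exponents $2n-4=8$ and $3n-9=9$ differ only by one and the constants matter --- the exact values ($891$ versus $[^6_3]_2=1395$, and in general a ratio of roughly $(q+1)q^{4-n}<1$) do confirm the strict inequality for all $n\geq 6$.
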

\begin{proof}
We follow the considerations from \cite[Corollary 3.5]{Mar}. By Statement 1 of Theorem \ref{TheoM} we see that the dual degrees of a $(k-1)-(n,k,\lambda)_q$-design and a $3$-spread  are $1$ and $s$ respectively,  where $s\leq 2$. 
Since covering radius of $3$-spread is $2$, its dual degree is also $2$ by Statement $3$ of Theorem \ref{TheoM}.  
The result follows from Statement $2$ of Theorem \ref{TheoM}.

\end{proof}




\section{Auxilary statements}

Let the positions of a vector $u$ be indexed by the vertices of a graph $\Gamma$.
If the set of the pairwise different values of $u$ is $\{a_0,\ldots,a_r\}$, denote by
$C^i$ the set of the vertices of $\Gamma$ such that $u_x=a_i$, $i\in \{0,\ldots, r\}$.
The partition $\{C^0,\ldots, C^r\}$ is called the {\it partition  associated } to the vector $u$.
We see that the eigenvectors taking two or three values are tightly related with the completely regular codes of covering radii $1$ and $2$.

\begin{theorem}\label{Theo1}
1. A code $C$ is completely regular in a $m$-regular graph $\Gamma$ with $\rho=1$ and eigenvalue $\theta$, $\theta\neq m$ if and only if $\{C,\overline{C}\}$ is the partition associated to an eigenvector of $\Gamma$ with eigenvalue $\theta$.

2. Let $\Gamma$ be a  $m$-regular graph $\Gamma$, $\{C^0,C^1,C^2\}$ be the partition associated to an eigenvector $u$ of $\Gamma$.
If there are no edges between $C^0$ and $C^2$ and any vertex of $C^1$ is adjacent to 
 exactly $\beta_1$ vertices of $C^2$ then  
 $C^0$ is a completely regular code in $\Gamma$.
\end{theorem}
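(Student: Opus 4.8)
Both statements are instances of the correspondence between equitable partitions and eigenvectors that are constant on the parts, and I would establish the two small cases I need by computing $Au$ directly, where $A$ is the adjacency matrix of $\Gamma$; throughout, $n_j(x)$ denotes the number of neighbors of a vertex $x$ in the $j$-th class of the relevant partition.

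\textbf{Part 1.} For the forward implication, assume $C$ is completely regular with $\rho=1$ and array $\{\beta_0;\gamma_1\}$, so every vertex of $C$ has $\beta_0$ neighbors in $\overline C$ and every vertex of $\overline C$ has $\gamma_1>0$ neighbors in $C$ (positivity since $\rho=1$). I would take $v$ equal to $\beta_0$ on $C$ and to $-\gamma_1$ on $\overline C$ and verify, using $m$-regularity, that $Av=(m-\beta_0-\gamma_1)v$; since $\gamma_1>0$ the two values differ and $\theta=m-\beta_0-\gamma_1\ne m$, so $\{C,\overline C\}$ is the partition associated to an eigenvector with eigenvalue $\theta$. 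Conversely, if $v$ is an eigenvector for $\theta\ne m$ whose associated partition is $\{C,\overline C\}$, then $v$ takes two distinct values $a$ on $C$ and $b$ on $\overline C$; reading $(Av)_x=\theta v_x$ at $x\in C$ gives $a\,n_0(x)+b\,n_1(x)=\theta a$ together with $n_0(x)+n_1(x)=m$, and solving this $2\times2$ system shows $n_0(x)$ is independent of $x\in C$, and likewise on $\overline C$; hence the partition is equitable and $C$ is completely regular. The only delicate point is that $\rho=1$ rather than $0$: were there no edges between $C$ and $\overline C$, then $Av$ would equal $ma$ on $C$ and $mb$ on $\overline C$, forcing $\theta=m$ (as $a\ne b$ cannot both vanish); so edges exist and every vertex of $\overline C$ has a neighbor in $C$.

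\textbf{Part 2.} Let $u$ have eigenvalue $\theta$ and take the three distinct values $a_0,a_1,a_2$ on $C^0,C^1,C^2$. I would extract the cross-class degrees one class at a time from $(Au)_x=\theta u_x$. For $x\in C^0$ there are no neighbors in $C^2$, so $n_2(x)=0$ and the equations $a_0 n_0(x)+a_1 n_1(x)=\theta a_0$, $n_0(x)+n_1(x)=m$ determine $n_0(x),n_1(x)$ independently of $x$ (using $a_0\ne a_1$); symmetrically, for $x\in C^2$ one has $n_0(x)=0$ and the pair $(n_1(x),n_2(x))$ is constant. For $x\in C^1$ the eigenvalue equation alone leaves the three unknowns $n_0,n_1,n_2$, and here the hypothesis $n_2(x)=\beta_1$ is exactly what is needed: then $a_0 n_0(x)+a_1 n_1(x)=\theta a_1-a_2\beta_1$ with $n_0(x)+n_1(x)=m-\beta_1$ again fixes $n_0(x),n_1(x)$. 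Thus $\{C^0,C^1,C^2\}$ is equitable with tridiagonal quotient, the only off-diagonal zeros being the $C^0$--$C^2$ entries.

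It remains to check that this equitable partition is the distance partition of $C^0$, and this is the step that uses the structural hypotheses rather than pure linear algebra. As there are no $C^0$--$C^2$ edges, every vertex of $C^2$ is at distance $\ge 2$ from $C^0$; since in the connected Johnson or Grassmann graph with all three classes nonempty $C^0$ is not a union of components, the constant number of $C^0$-neighbors of a $C^1$-vertex is positive, so $C^1$ is precisely the set at distance $1$; and because $C^2$ can only be reached through $C^1$, each of its vertices has a $C^1$-neighbor and so lies at distance exactly $2$. Hence $C_0=C^0$, $C_1=C^1$, $C_2=C^2$, this partition is equitable and tridiagonal, and $C^0$ is completely regular with $\rho=2$. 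I expect this identification of the value partition with the distance partition to be the main obstacle, since the linear algebra only yields an equitable partition and one must separately rule out the degenerate cases in which $C^0$ or $C^2$ splits off as a union of whole components before the intersection numbers become those of a genuine distance partition.
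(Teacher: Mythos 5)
Your proposal is correct, and for Part 2 it follows essentially the same route as the paper: write the eigenvalue equation for $u$ on each class, use the absence of $C^0$--$C^2$ edges when the vertex lies in $C^0$ or $C^2$ and the hypothesis on $\beta_1$ when it lies in $C^1$, and solve the resulting linear relations to conclude that all cross-class degrees are constant --- your three small systems are exactly the paper's equations (\ref{Theo1eq0})--(\ref{Theo1eq1}). The differences are points where you are more complete than the paper. For Part 1 the paper gives no argument at all, citing instead De Bruyn--Suzuki and Avgustinovich--Mogilnykh; your explicit eigenvector ($\beta_0$ on $C$, $-\gamma_1$ on $\overline{C}$) and the converse $2\times 2$ computation form a correct self-contained substitute, though to get the full equivalence you should also note that the quotient eigenvalue $m-\beta_0-\gamma_1$ of the resulting equitable pair equals $\theta$, which follows by summing the two expressions for the cross-degrees. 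More substantively, in Part 2 the paper stops at ``the partition is equitable, hence $C^0$ is completely regular by definition,'' silently identifying the value partition of $u$ with the distance partition of $C^0$; you are right that this requires the positivity of the $C^1\to C^0$ and $C^2\to C^1$ degrees, which can only fail when $C^0$ or $C^2$ is a union of connected components, and your connectivity argument disposes of exactly that. This is not a vacuous worry: for a disconnected regular graph the statement as literally formulated can fail (take $u$ constant on components, with eigenvalue $m$), so your restriction to connected graphs --- which covers the Johnson and Grassmann graphs where the theorem is actually applied --- is the hypothesis under which the claim is true, and your flagging of this step as the real content beyond linear algebra is a fair criticism of the paper's own proof.
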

\begin{proof}
1. The result could be found in  \cite[Proposition 3.2]{DeBruSuz} or \cite[Proposition 1]{AvgMog}.

2. Let $a_0$, $a_1$, $a_2$ be the values of $u$ on $C^0$, $C^1$, $C^2$ respectively.
 Let a vertex $x$ of $C^0$  be adjacent to $\alpha_0(x)$ vertices of $C^0$. Since there are no edges between the vertices $C^0$ and $C^2$, the vertex $x$ is adjacent to $\beta_0(x)=m-\alpha_0(x)$ vertices of $C^1$.
   Consider the sum of the  values of $u$ on the neighbors of $x$.  Since $u$ is an eigenvector with eigenvalue $\theta$ we have the following:

\begin{equation}\label{Theo1eq0}\theta a_0 =(m-\beta_0(x))a_0+\beta_0(x)a_1. \end{equation}
This implies that $\alpha_0(x)$ and $\beta_0(x)$ do not depend on $x$.
The same argument on $C^1$ and $C^2$ implies that $x\in C^2$ has exactly $\gamma_2$ and $\alpha_2$ neighbors in $C^1$ and $C^2$ respectively. This follows from  $\alpha_2+\gamma_2=m$ and the equation
\begin{equation}\label{Theo1eq2}\theta a_2 =(m-\alpha_2)a_1+\alpha_2a_2. \end{equation}

Let a vertex $x$ of $C^1$ be adjacent to $\gamma_1(x)$, $\alpha_1(x)$ and $\beta_1$ vertices of $C^0$, $C^1$ and $C^2$ respectively. 
Note that $\beta_1$ does not depend on $x$ by the condition of the theorem.
Taking into account that $\alpha_1(x)+\beta_1+\gamma_1(x)=m$, the sum of the  values of $u$ on the neighbors of $x$ is

\begin{equation}\label{Theo1eq1}\theta a_1 =\gamma_1(x) a_0+(m-\gamma_1(x)-\beta_1)a_1+\beta_1a_2,\end{equation}

The above implies that $\alpha_1(x)$ and $\gamma_1(x)$ do not depend on $x$ and $C^0$ is a completely regular code by the definition.

\end{proof}

\section{Inducing map and completely regular codes}

For $q\geq 1$ and $l\geq k$ consider the matrix $I_{l,k}$ whose rows and columns are indexed by the vertices of $J_q(n,l)$ and those of $J_q(n,k)$ respectively, $I_{l,k}(x,y)$ is $1$ if $y$ is contained in $x$ and $0$ otherwise. Kantor in \cite{Kant} proved that $I_{l,k}$ is a full rank matrix. 
Moreover, it is clear that the left multiplication by $I_{l,k}$ maps the  column-vectors of the subspace $T_i(k)$ to those of $T_i(l)$  (see Section 2.1 for their definitions). Therefore, $I_{l,k}$ is a bijection from the eigenspace $U_{i,q}(n,k)$
to $U_{i,q}(n,l)$ for any $i\in \{0,\ldots,k\}$.

\begin{theorem}\label{TheoI}
Let $u$ be an eigenvector of $J_q(n,k)$ with eigenvalue $\theta_{i,q}(n,k)$, $q\geq 1$. Then for any $l\geq k$ the vector $I_{l,k}u$ is an eigenvector of $J_q(n,l)$ with eigenvalue $\theta_{i,q}(n,k)$.
\end{theorem}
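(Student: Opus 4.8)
The plan is to reduce everything to a single inclusion step and then apply a quadratic ``up--down'' identity. First observe that the $k+1$ eigenvalues $\theta_{0,q}(n,k),\dots,\theta_{k,q}(n,k)$ of $J_q(n,k)$ are pairwise distinct (the graph is distance-regular of diameter $k$), so any eigenvector $u$ with eigenvalue $\theta_{i,q}(n,k)$ lies in the single eigenspace $U_i(n,k)$ of Theorem \ref{TheoE}. Hence it suffices to show that $I_{l,k}$ sends $U_i(n,k)$ into one eigenspace of $J_q(n,l)$. Together with the fact recorded just before the theorem, namely $I_{l,k}T_i(k)\subseteq T_i(l)$, the natural target is $U_i(n,l)$, whose eigenvalue is $\theta_{i,q}(n,l)$ by Theorem \ref{TheoE}.

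Because of the composition identity $I_{l,m}I_{m,k}=[^{l-k}_{m-k}]_q\,I_{l,k}$ for $k\le m\le l$ (a one-line count of the $m$-subspaces sandwiched between a $k$-subspace and an $l$-subspace), the matrix $I_{l,k}$ is a nonzero scalar multiple of the composite $I_{l,l-1}I_{l-1,l-2}\cdots I_{k+1,k}$ of consecutive one-step inclusions. Since the eigenvector property is unaffected by nonzero scalars, I would induct on $l-k$ and treat only the step $l=k+1$.

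For the single step, let $A_m$ denote the adjacency matrix of $J_q(n,m)$. Counting the $k$-subspaces, respectively $(k+1)$-subspaces, common to two vertices yields the two quadratic relations
\begin{equation*}
I_{k+1,k}^{\top}I_{k+1,k}=A_k+[^{n-k}_1]_q\,\mathrm{Id}, \qquad I_{k+1,k}I_{k+1,k}^{\top}=A_{k+1}+[^{k+1}_1]_q\,\mathrm{Id}.
\end{equation*}
Then for an eigenvector $u$ of $A_k$ with eigenvalue $\theta$,
\begin{equation*}
A_{k+1}\,I_{k+1,k}u=I_{k+1,k}\bigl(I_{k+1,k}^{\top}I_{k+1,k}\bigr)u-[^{k+1}_1]_q\, I_{k+1,k}u=\Bigl(\theta+[^{n-k}_1]_q-[^{k+1}_1]_q\Bigr)I_{k+1,k}u,
\end{equation*}
so $I_{k+1,k}u$ is again an eigenvector, and it is nonzero since $I_{k+1,k}$ is injective by Kantor's full-rank theorem \cite{Kant}. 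Iterating over the $l-k$ steps and telescoping the shifts $[^{n-m}_1]_q-[^{m+1}_1]_q$ returns the eigenvalue $\theta_{i,q}(n,l)$.

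The main obstacle is bookkeeping rather than any deep idea: verifying the two counting identities in the Grassmann (and $q=1$ Johnson) setting, and checking that the accumulated shift lands exactly on $\theta_{i,q}(n,l)$, i.e.\ that $\theta_{i,q}(n,k)+[^{n-k}_1]_q-[^{k+1}_1]_q=\theta_{i,q}(n,k+1)$, which is a routine $q$-binomial manipulation. I would stress that this up--down computation is precisely what upgrades the ``Therefore'' preceding the theorem into a proof: the inclusion $I_{l,k}T_i(k)\subseteq T_i(l)$ alone only places $I_{l,k}u$ in $\bigoplus_{j\le i}U_j(n,l)$, and it is the eigenvalue identity above that forces the relevant component to sit in the single summand $U_i(n,l)$.
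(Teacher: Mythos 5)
Your proof is correct, and it takes a genuinely different route from the paper's. The paper offers no separate proof of Theorem \ref{TheoI}: its justification is the paragraph preceding the statement, which notes that left multiplication by $I_{l,k}$ maps $T_i(k)$ into $T_i(l)$, invokes Kantor's full-rank theorem \cite{Kant}, and concludes from these two facts alone that $I_{l,k}$ is a bijection from the eigenspace $U_{i,q}(n,k)$ onto $U_{i,q}(n,l)$, whence the theorem. Your argument replaces that conceptual step by an explicit computation: factor $I_{l,k}$ into one-step inclusions via $I_{l,m}I_{m,k}=[^{l-k}_{m-k}]_q\,I_{l,k}$, and handle a single step through the identities $I_{k+1,k}^{\top}I_{k+1,k}=A_k+[^{n-k}_1]_q\,\mathrm{Id}$ and $I_{k+1,k}I_{k+1,k}^{\top}=A_{k+1}+[^{k+1}_1]_q\,\mathrm{Id}$, both of which are correct, as is the shift identity $\theta_{i,q}(n,k)+[^{n-k}_1]_q-[^{k+1}_1]_q=\theta_{i,q}(n,k+1)$ that you left as a routine check (it does hold for all $q\geq 1$). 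What your route buys is rigor and self-containedness: your closing remark is a fair criticism of the paper, since $I_{l,k}T_j(k)= T_j(l)$ for all $j$ together with injectivity does not by itself force $I_{l,k}U_{i,q}(n,k)\subseteq U_{i,q}(n,l)$ --- a linear bijection preserving a filtration need not preserve the orthogonal complements $U_i=T_i(k)\cap T_{i-1}^{\perp}(k)$ --- so the paper's ``Therefore'' genuinely needs either your up--down computation or an equivariance (Schur's lemma) argument to close it. What the paper's route buys is brevity and the structural statement of a bijection between eigenspaces, which your proof essentially recovers anyway (injectivity plus a dimension count via Kantor's theorem). Two small remarks: your opening reduction to the single eigenspace $U_i$ is superfluous, since your one-step computation applies verbatim to any eigenvector with any eigenvalue; and what you prove (and what the paper actually uses later for $I_{4,2}$ and $I_{6,4}$) is that the image has eigenvalue $\theta_{i,q}(n,l)$, so the ``$\theta_{i,q}(n,k)$'' appearing in the statement of Theorem \ref{TheoI} is a typo that your argument silently and correctly repairs.
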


We make use of the theorem above for obtaining a series of completely regular designs.

\begin{theorem}\label{Corol2} 
1. \cite[Proposition 4]{AvgMog} Let $D$ be a $(k-1)-(n,k,1)$-design. Then 
$$\{U:U\subset \{1,\ldots,n\}, |U|=k+1, |\{V\in D:V\subset U\}|=0\}$$  is a completely regular code in the Johnson graph $J(n,k+1)$ with $\rho=1$.

2. Let $D$ be a $(k-1)-(n,k,1)_q$-design. Then $$\{U:U<F_q^n, dim(U)=k+1, |\{V\in D:V< U\}|=0\}$$
 is a completely regular code in the Grassmann graph $J_q(n,k+1)$ with $\rho=1$.
\end{theorem}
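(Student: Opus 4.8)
The plan is to realize the complement of the target code as the image of $\chi_D$ under the inducing map $I_{k+1,k}$ of Section 4, and then to apply the eigenvector characterization of completely regular codes with $\rho=1$. I would treat both statements at once for $q\geq 1$, since every ingredient used below holds uniformly for the Johnson ($q=1$) and Grassmann ($q\geq 2$) cases. Write $C$ for the code in question inside $J_q(n,k+1)$ and $\overline{C}$ for the set of $(k+1)$-subspaces containing at least one block of $D$.

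The crux of the argument, and the only place where the hypothesis $\lambda=1$ is essential, is the claim that \emph{every $(k+1)$-subspace contains at most one block of $D$}. If two distinct blocks $V_1,V_2\in D$ were contained in a common $(k+1)$-subspace $U$, then $\dim(V_1+V_2)\leq \dim U=k+1$ together with $\dim V_1=\dim V_2=k$ forces $\dim(V_1\cap V_2)=k-1$; thus a single $(k-1)$-subspace would lie in two blocks, contradicting the definition of a $(k-1)-(n,k,1)_q$-design. Consequently the $U$-entry of $I_{k+1,k}\chi_D$, which counts the blocks of $D$ inside $U$, is either $0$ or $1$, and therefore $I_{k+1,k}\chi_D=\chi_{\overline{C}}$.

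I would then transport the spectral structure of $D$ through this identity. By Statement 1 of Corollary \ref{Cor1} the design $D$ is completely regular in $J_q(n,k)$ with $\rho=1$, nontrivial eigenvalue $\theta_{k,q}(n,k)$ and dual degree $1$, so that $\chi_D=u_0+u_k$ with $u_0\in U_0(n,k)$ a multiple of the all-ones vector and $u_k\in U_k(n,k)$. Since $I_{k+1,k}$ is a bijection from $U_i(n,k)$ onto $U_i(n,k+1)$ (the remark preceding Theorem \ref{TheoI}), applying it yields
\[
\chi_{\overline{C}}=I_{k+1,k}\chi_D=c\,\mathbf{1}+w,\qquad c\,\mathbf{1}\in U_0(n,k+1),\quad w=I_{k+1,k}u_k\in U_k(n,k+1),
\]
where $\mathbf{1}$ is the all-ones vector of $J_q(n,k+1)$ and $w$ is an eigenvector with eigenvalue $\theta_{k,q}(n,k+1)$. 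Hence $\chi_{\overline C}$, and with it $\chi_C=\mathbf 1-\chi_{\overline C}$, has dual degree at most $1$, and $w$ is constant on each of $C$ and $\overline{C}$, taking there the values $-c$ and $1-c$.

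It remains to rule out degeneracy. Because $D$ is a nontrivial design, $\chi_D\notin U_0(n,k)$, so $u_k\neq 0$; injectivity of the bijection then gives $w\neq 0$. As the two values $-c$ and $1-c$ of $w$ are distinct, $w\neq 0$ forces both $C$ and $\overline{C}$ to be nonempty, so $w$ takes exactly two values and the partition associated to $w$ is $\{C,\overline{C}\}$. By Statement 1 of Theorem \ref{Theo1} this is precisely the assertion that $C$ is completely regular in $J_q(n,k+1)$ with covering radius $1$. I expect the combinatorial "at most one block" step to be the main point; once it is in place, the remaining work is the routine bookkeeping of pushing the dual-degree-$1$ decomposition of $\chi_D$ forward along the inducing map.
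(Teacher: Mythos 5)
Your proposal is correct and is essentially the paper's own argument: the paper likewise combines the $\lambda=1$ fact that a $(k+1)$-subspace contains at most one block of $D$, the inducing map $I_{k+1,k}$ together with Theorem \ref{TheoI}, and the eigenvector characterization in Statement 1 of Theorem \ref{Theo1}. The only (immaterial) difference is that the paper pushes forward the two-valued eigenvector associated to $\{D,\overline{D}\}$ directly, whereas you push forward $\chi_D$ and then strip off its $U_0$-component --- the two vectors differ by a constant vector and a scalar, so the computations coincide; your write-up just adds the non-degeneracy checks the paper leaves implicit.
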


\begin{proof}

2. We use the approach of work \cite{AvgMog} based on the inclusion matrix.
The code $D$ is completely regular with  covering radius $1$ by Corollary \ref{Cor1}.
By the first statement of Theorem \ref{Theo1} there is an eigenvector $u$ of $J_q(n,k)$ with associated partition $\{D,\overline{D}\}$.
A $(k+1)$-subspace of $F_q^n$ contains either $0$ or $1$ subspaces of $D$. These two facts combined  imply that
 $I_{k+1,k}u$ takes only two values.  Moreover, the associated partition to vector $I_{k+1,k}u$  is the partition into the codes $$\{U:U<F_q^n, dim(U)=k+1, |\{V\in D:V< U\}|=0\}\mbox{  and } $$ $$\{U:U<F_q^n, dim(U)=k+1, |\{V\in D:V< U\}|=1\}.$$
By the first statement of Theorem \ref{TheoI} we see that $I_{k+1,k}u$ is an eigenvector of $J_q(n,k+1)$. By the first statement of Theorem \ref{Theo1} we obtain the required.

\end{proof}
{\bf Remark 1}. A combinatorial proof for the statement above in case when $k$ is $2$ (i.e. $2$-spreads) for the Grassmann graphs could be found in \cite[Lemma 12]{Met}. Apart from $2$-spreads the only known example of $(k-1)-(n,k,1)_q$-design, $q\geq 2$ is the Steiner triple system constructed in \cite{Brau2}. This implies the existence of a completely regular code in $J_2(13,4)$ by Theorem \ref{Corol2}.

In the sections below we proceed further with the idea described in Theorem \ref{Corol2} and show that the Steiner quadruple systems of the extended Hamming code and the Desarguesian $2$-spreads produce completely regular codes in  the Johnson graph $J(n,6)$ and the Grassmann graph $J_q(n,4)$ respectively.

\section{Completely regular code in the Johnson graph $J(n,6)$ from the SQS of the extended Hamming code} 
We use the traditional point-block terms throughout the section. 
Consider the design ${\cal Q}$ whose points are the coordinate positions and blocks are the supports of the codewords of weight $4$ of the extended Hamming code of length $n$.
It is well-known that ${\cal Q}$ is a $3-(n,4,1)$-design, also known as a 
{\it Steiner quadruple system}. 

The codes ${\cal Q}$ and  
 $\{x: x \subset \{1,\ldots,n\}, |x|=5, |\{B \in {\cal Q}, B\subset x\}|=0\}$ are completely regular in $J(n,4)$ and $J(n,5)$ respectively by Corollary \ref{Cor1} and Theorem \ref{Corol2}. 
 
\begin{theorem}\label{TheoSQS}
Let ${\cal Q}$ be the Steiner quadruple system of the extended Hamming code of length $n$.
The code $\{x: x \subset \{1,\ldots,n\}, |x|=6, |\{B \in {\cal Q}, B\subset x\}|=0\}$ is completely regular in $J(n,6)$ with $\rho=2$. \end{theorem}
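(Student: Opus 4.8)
The plan is to lift the structure of ${\cal Q}$ from $J(n,4)$ to $J(n,6)$ through the inducing map $I_{6,4}$ and then invoke the second statement of Theorem \ref{Theo1}. By Corollary \ref{Cor1} the design ${\cal Q}$ is a completely regular code in $J(n,4)$ with $\rho=1$, so by the first statement of Theorem \ref{Theo1} there is an eigenvector $u$ of $J(n,4)$ with associated partition $\{{\cal Q},\overline{{\cal Q}}\}$, taking values $a$ on ${\cal Q}$ and $b\neq a$ on its complement. By Theorem \ref{TheoI} the vector $I_{6,4}u$ is an eigenvector of $J(n,6)$, and it is nonzero since $I_{6,4}$ is injective on the eigenspaces. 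Its value at a $6$-subset $x$ is $15b+m_x(a-b)$, where $m_x$ is the number of blocks of ${\cal Q}$ contained in $x$; thus the associated partition of $I_{6,4}u$ is governed entirely by $m_x$, and the code $C$ is the class $m_x=0$.

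The key combinatorial fact I would establish is that $m_x\in\{0,1,3\}$ and never equals $2$. Identifying the $n=2^m$ points with $F_2^m$, the blocks of ${\cal Q}$ are exactly the $4$-subsets summing to $0$ (the affine planes). For a $6$-subset $x$ with point-sum $\sigma$, the complement in $x$ of a pair $\{p,p'\}$ is a block precisely when $p+p'=\sigma$, so $m_x$ counts the pairs in $x$ summing to $\sigma$. If two disjoint such pairs exist, the sum of the remaining two points is forced to equal $\sigma$ as well, producing a third pair; hence $m_x\neq 2$, the eigenvector $I_{6,4}u$ takes exactly three values, and the associated partition is $\{C^0,C^1,C^2\}$ with $C^i$ the class $m_x\in\{0,1,3\}$ listed in order, $C^0=C$. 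A vertex of $C^2$ is then a $6$-set closed under the translation $p\mapsto p+\sigma$, in which every point lies in exactly two of its three blocks.

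It remains to verify the two hypotheses of the second statement of Theorem \ref{Theo1}. There are no edges between $C^0$ and $C^2$: if $x'\in C^2$ is adjacent to $x$ with $a'$ the point of $x'\setminus x$, then the unique block of $x'$ avoiding $a'$ lies in $x'\setminus\{a'\}\subseteq x$, so $x\notin C^0$. For the constancy of $\beta_1$ I would fix $x\in C^1$, write $x=B\cup\{p_1,p_2\}$ with $B$ the unique block and $p_1+p_2=\sigma$, and let $V_B$ be the two-dimensional direction space of the affine plane $B$. Deleting a point $a\in B$ cannot produce a $C^2$ vertex, since in a three-block neighbour the added point would have to pair under the translation with $\sigma+a$, forcing $\sigma+a$ to be a retained point of $x$, whereas $\sigma+a\in x$ only when $a\in\{p_1,p_2\}$. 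Deleting $p_1$ and adding $q$ yields a $C^2$ vertex exactly when $q\in p_2+(V_B\setminus\{0\})$, and all three such $q$ lie outside $x$ (here $x\in C^1$ rules out $q=p_1$, and $p_2\notin B$ rules out the others); deleting $p_2$ gives three more, with the two triples disjoint. Hence $\beta_1=6$ independently of $x$, so by Theorem \ref{Theo1} the code $C$ is completely regular; since the three classes are nonempty and $C^2$ lies at distance $2$, the distance partition is $\{C^0,C^1,C^2\}$ and $\rho=2$.

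The two spectral inputs, Theorems \ref{TheoI} and \ref{Theo1}, apply almost mechanically once the value set of $I_{6,4}u$ is understood, so the real content is the affine-geometric bookkeeping of the last two paragraphs. The hard part will be the computation of $\beta_1$ and its independence of $x$: it requires pinning down exactly which single-point moves create the rigid, translation-symmetric three-block configurations. The identification of blocks with zero-sum quadruples, and hence with affine planes and their direction spaces, is precisely what makes both the exclusion of block-point deletions and the count of exactly three admissible $q$ per special deletion come out uniform in $x$.
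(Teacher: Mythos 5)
Your proposal is correct and takes essentially the same route as the paper: the same partition of $J(n,6)$ into the classes of $6$-subsets containing $0$, $1$ and $3$ blocks, the same lifting of the two-valued eigenvector of $J(n,4)$ by $I_{6,4}$ via Theorem \ref{TheoI}, and the same application of the second statement of Theorem \ref{Theo1} after checking that $C^0$ and $C^2$ are non-adjacent and that every vertex of $C^1$ has exactly $6$ neighbours in $C^2$. The only difference is in bookkeeping: you verify the combinatorial facts in the coordinate model (points identified with $F_2^m$, blocks with zero-sum quadruples, so $C^2$-neighbours of $x=B\cup\{p_1,p_2\}$ come from $q\in p_i+(V_B\setminus\{0\})$, giving $3+3=6$), whereas the paper argues directly from linearity of the extended Hamming code (symmetric differences of blocks are blocks, giving the count $[^4_2]_1\cdot 2/2=6$) --- two equivalent formulations of the same argument.
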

\begin{proof}
 Let $B$ and $B'$ be two blocks of ${\cal Q}$, $|B\cap B'|=2$. Because the extended Hamming code is linear the symmetric difference $B \Delta B'$ is also a block of ${\cal Q}$. Therefore any $6$-subset of $\{1,\ldots,n\}$ contains $0$, $1$ or $3$ blocks of ${\cal Q}$. We consider the following codes:
$$C^0=\{x: x \subset \{1,\ldots,n\}, |x|=6, |\{B \in {\cal Q}: B\subset x\}|=0\},$$
$$C^1=\{x: x \subset \{1,\ldots,n\}, |x|=6, |\{B \in {\cal Q}: B\subset x\}|=1\},$$
$$C^2=\{x: x \subset \{1,\ldots,n\}, |x|=6, |\{B \in {\cal Q}: B\subset x\}|=3\} $$
 that partition the vertices of $J(n,6)$. 

We are now to show that $C^0$ is a completely regular code and  $\{C^0, C^1, C^2\}$ is the distance partition. 
In view of the second statement of Theorem \ref{Theo1} we prove that the vertices of $C^0$ and $C^2$ are disjoint and a vertex of $C^1$ is adjacent to exactly $6$ vertices of $C^2$. We finish the proof by noting that $\{C^0, C^1, C^2\}$ is the partition associated to an eigenvector.
Note that an alternative proof could be done by solely combinatorial arguments for the existence of the intersection array.

A $6$-subset from $C^2$  contains three blocks of ${\cal Q}$. Moreover, the symmetric difference of any two of these blocks is the third block. We see that the following holds:

\begin{equation}\label{eqSQS}\mbox{ for any } x\in C^2\mbox{ and any } i\in x, \mbox{ there is } B\in {\cal Q},i\notin B, B\subset x. \end{equation}
From (\ref{eqSQS}) we conclude that the vertices of $C^0$ and $C^2$ are nonadjacent in the Johnson graph $J(n,6)$. 

Let $y$ be a vertex of $C_1$ and $B$ be a unique block of ${\cal Q}$ such that $B\subset y$. 
Let $x\in C^2$ be a vertex that is adjacent to $y$ in $J(n,6)$, i.e. $x=(y\setminus \{i\}) \cup \{j\}$ for some $i\notin y, j\in y$.

Suppose $i$ is in $B$. Then by property (\ref{eqSQS}) there is a block $B'$ of ${\cal Q}$ that is a subset of $x\setminus \{j\}$.
We see that $B'\subset (x\setminus \{j\}) \subset y$.
Moreover $B'$ is not $B$, because 
 $i\in B$, $i\notin x$ and $B' \subset x$.
We have that distinct blocks $B'$ and $B$  from ${\cal Q}$  are subsets of $y$, which contradicts $y\in C^1$. 

We have that $i\in y\setminus B$.
 Then the following blocks of ${\cal Q}$ are subsets of $x$: $B$, $\{s, t, l, j\}$ and $B \Delta\{s, t, l, j\}$ for some $s, t\in B$, $\{l\}=x\setminus (B\cup \{j\})$. On the other hand, given the $2$-subset $\{s,t\}$ of $B$ and the point $l$ from $y\setminus B$ the point $j$ could be reconstructed. Indeed, the block $\{s,t,l,j\}$ is a unique block in $3-(n,4,1)$-design ${\cal Q}$ containing $\{s,t,l\}$.
Since $\{s, t, l, j\}$ and $B \Delta\{s, t, l, j\}$ are contained in the same $x$ from $C^2$, we conclude that there are exactly $[^4_2]_1\cdot 2/2=6$ neighbors of $y$ in $C^2$.

We finally note that the partition $\{C^0,C^1,C^2\}$ is the partition associated to an eigenvector of $J(n,6)$.
The Steiner quadruple system ${\cal Q}$ is a completely regular code with covering radius $1$ and eigenvalue $\theta_{4}(n,4)$ by Corollary \ref{Cor1}. Then by the first statement of Theorem \ref{Theo1} there is an eigenvector $v$ with eigenvalue $\theta_{4}(n,4)$ such that $\{ {\cal Q},\overline {\cal Q}\}$ is the partition associated to $v$. By Theorem \ref{TheoI} the vector $I_{6,4} v$ is an eigenvector of $J(n,6)$ with eigenvalue $\theta_{4}(n,6)$. The definitions of the inclusion matrix $I_{6,4}$ and the codes $C^0$, $C^1$, $C^2$ imply that
these codes form the partition associated to $I_{6,4} v$. By the second statement of Theorem \ref{Theo1} we conclude that $C^0$ is completely regular.
\end{proof}

{\bf Remark 2.} One might obtain the intersection array $\{60,6;4(n-15),6(n-8)\}$ of the code from Theorem \ref{TheoSQS} by combinatorial arguments or following the proof of Theorem \ref{Theo1} from equations (\ref{Theo1eq0})-(\ref{Theo1eq1}).


\begin{corollary}
The set of the codewords of weight $6$ of the extended Hamming code of length $16$ is a completely regular code in $J(16,6)$ with $\rho=2$.
\end{corollary}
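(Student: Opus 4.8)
The plan is to identify the set of weight-$6$ codeword supports with the code $C^0$ of Theorem \ref{TheoSQS} specialised to $n=16$, after which the corollary is immediate. Write $W_6$ for the collection of supports of the weight-$6$ codewords of the extended Hamming code of length $16$; since over $F_2$ a codeword is determined by its support, $|W_6|$ equals the number $A_6$ of weight-$6$ codewords. Recall that for $n=16$ the blocks of ${\cal Q}$ are exactly the supports of the weight-$4$ codewords, and ${\cal Q}$ is a $3-(16,4,1)$-design. It therefore suffices to prove that $W_6=C^0$, i.e. that the $6$-subsets carrying a weight-$6$ codeword are precisely those containing no block of ${\cal Q}$.

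First I would establish the inclusion $W_6\subseteq C^0$ using linearity and the minimum distance $4$. If $c$ is a weight-$6$ codeword whose support contained a block $B$, then $B$ is the support of a weight-$4$ codeword $c'$ with $\mathrm{supp}(c')\subseteq\mathrm{supp}(c)$, so $c+c'$ would be a nonzero codeword with support $\mathrm{supp}(c)\setminus\mathrm{supp}(c')$ of size $6-4=2$, contradicting the minimum distance $4$. Hence every element of $W_6$ contains no block and lies in $C^0$.

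The reverse inclusion I would obtain through a cardinality coincidence rather than a direct combinatorial construction, which is where the real work lies. On one side, applying the MacWilliams transform to the dual code, the $[16,5,8]$ first-order Reed--Muller code $RM(1,4)$ with weight enumerator $x^{16}+30x^8y^8+y^{16}$, gives $A_6=448$, whence $|W_6|=448$. On the other side, I would compute $|C^0|$ from the design: each block lies in $\binom{12}{2}=66$ of the $\binom{16}{6}=8008$ six-subsets, so the number of incident (block, six-subset) pairs is $140\cdot 66=9240=|C^1|+3|C^2|$. Two distinct blocks lie in a common $6$-subset exactly when they meet in $2$ points, their union (which also carries their symmetric difference) being the unique such $6$-subset and a member of $C^2$; counting the intersecting block pairs gives $|C^2|=840$, then $|C^1|=6720$, and finally $|C^0|=8008-6720-840=448$.

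Since $W_6\subseteq C^0$ and $|W_6|=|C^0|=448$, the two sets coincide, so $W_6$ is exactly the code $C^0$ of Theorem \ref{TheoSQS} for $n=16$, which is completely regular in $J(16,6)$ with $\rho=2$. The only delicate point is the reverse inclusion: there is no evident intrinsic reason why a blockless $6$-subset must support a codeword, and I expect to settle it precisely by the matching of the two cardinalities computed above.
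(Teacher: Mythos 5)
Your proof is correct, and its skeleton matches the paper's: both establish the inclusion of the weight-$6$ supports in $C^0$ via linearity and minimum distance, and then force equality by showing $|C^0|=448=A_6$. The difference lies in how the two cardinalities are obtained. The paper computes $|C^0|$ by exploiting the complete regularity just proved in Theorem \ref{TheoSQS}: it double-counts edges between the distance classes and plugs in the intersection array $\{60,6;4(n-15),6(n-8)\}$ from Remark 2, which for $n=16$ yields $|C^0|\bigl(1+\tfrac{60}{4}+\tfrac{60}{4}\cdot\tfrac{6}{48}\bigr)=8008$ and hence $|C^0|=448$; the value $A_6=448$ is simply quoted as a known property of the extended Hamming code. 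You instead compute $|C^0|$ by purely design-theoretic double counting (block--subset incidences giving $|C^1|+3|C^2|=9240$, intersecting block pairs giving $|C^2|=840$, hence $|C^1|=6720$ and $|C^0|=448$), which does not use the intersection array or the complete regularity at all, and you additionally justify $A_6=448$ by a MacWilliams computation from the weight enumerator of $RM(1,4)$. Your route is more self-contained and elementary (it would let a reader check the numbers without trusting Remark 2, whose derivation the paper only sketches), while the paper's is shorter because it recycles structural information already paid for in Theorem \ref{TheoSQS}. One small point: your counts implicitly use that two distinct blocks of a Steiner quadruple system meet in at most two points and that each $2$-subset lies in exactly $7$ blocks; these are standard facts, but stating them would make the pair count $140\cdot 36/2=2520=3|C^2|$ fully explicit.
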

\begin{proof}
The supports of the codewords of weight $6$ are contained in the code $C=\{x: x \subset \{1,\ldots,n\}, |x|=6, |\{ B \in {\cal Q}, B\subset x\}|=0\}$ because otherwise the minimum distance of the extended Hamming code is $2$.
 The definition of a completely regular code and the double counting of edges between $C_i$ and $C_{i+1}$ imply the following: $$|C|\beta_0=|C_1|\gamma_1,\mbox{ } |C_1|\beta_0=|C_2|\gamma_1, \mbox{ }|C|+|C_1|+|C_2|=[^n_6]_1.$$

The intersection array obtained in  Remark $1$, so for $n=16$ we have $$|C|+|C|60/4+|C|6/48=[^{16}_6]=8008$$
and therefore $|C|$ is $448$. Since there are exactly $448$ codewords of weight $6$ in extended Hamming code of length $16$ we conclude that they coincide with $C$, which is 
a completely regular code in $J(16,6)$ by Theorem \ref{TheoSQS}.

\end{proof}

\section{Completely regular code  in $J_q(n,4)$ from Desarguesian $2$-spread}

Let $F'$ be the subfield of the field $F_{q^n}$ of order $q^2$. 
The elements of the multiplicative group of $F_{q^n}$ are parted into the cosets of that of $F'$. We treat $F_{q^n}$ as the vector space $F_q^n$ and any multiplicative coset of $F'$
corresponds to a $2$-subspace of $F_q^n$.
The collection of such subspaces is a $2$-spread, which is called {\it Desarguesian}. A subspace is called $F'$-{\it closed} if its vectors (threated as elements of $F_{q^n}$) are closed under the multiplication by the elements of $F'$. In particular, the subspaces of a Desarguesian $2$-spread are $F'$-closed. For a subset of $F_q^n$ the minimal inclusion-wise $F'$-closed subspace that contains the subset is called its $F'$-{\it closure}.

Any $2$-spread ${\cal L}$ and all $3$-subspaces
of $F_q^n$ that do not contain any subspace from ${\cal L}$ are completely regular codes in $J_q(n,2)$ and $J_q(n,3)$ respectively.
 In case when ${\cal L}$ is a Desarguesian spread we will show that the code $\{U:U <F_q^n\mbox{, }dim(U)=4, |\{X \in {\cal L}: X<U\}|=0\}$ is a completely regular code in $J_q(n,4)$.

Consider a $4$-subspace $U$ of $F_q^n$. It can contain $0$, $1$ or at least $2$ subspaces from ${\cal L}$. Suppose $X$ and $X'$ are   
$2$-subspaces from ${\cal L}$ that are contained in $U$. Because $X$ and $X'$ meet only in a zero vector, all vectors of $U$ are linear combinations of the vectors of $V$ and $V'$. Moreover since $X$ and $X'$ are $F'$-closed, so is $U$. In other words, the nonzero vectors of $U$ are parted by nonzero vectors from $q^2+1$  subspaces from ${\cal L}$. We have the following code partition of the vertices of $J_q(n,4)$:
$$C^0=\{U:U <F_q^n\mbox{, }dim(U)=4, |\{V \in {\cal L}: V<U\}|=0\},$$
$$C^1=\{U:U <F_q^n\mbox{, }dim(U)=4, |\{V \in {\cal L}: V<U\}|=1\},$$
$$C^2=\{U:U <F_q^n\mbox{, }dim(U)=4, |\{V \in {\cal L}: V<U\}|=q^2+1\}.$$

We now show some structural properties of these codes. 

\begin{lemma}\label{LemGr1}
Any $3$-subspace of $U$, $U\in C^2$ contains exactly one subspace from ${\cal L}$. In particular, the subspaces from $C^0$ and $C^2$ are
nonadjacent in $J_q(n,4)$.
\end{lemma}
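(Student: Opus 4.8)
The plan is to reduce the lemma to a double count of nonzero vectors over the spread elements that partition $U$. First I would invoke the structural facts established just before the statement: a subspace $U\in C^2$ is $F'$-closed and its $q^4-1$ nonzero vectors are partitioned into the nonzero vectors of the $q^2+1$ spread elements $V_1,\ldots,V_{q^2+1}\in{\cal L}$ with $V_i<U$. Since each $V_i$ is a $2$-subspace and $(q^2+1)(q^2-1)=q^4-1$, the family $\{V_i\}$ is in fact a spread of the $4$-subspace $U$. Now fix an arbitrary $3$-subspace $W<U$; the goal is to determine the number $m$ of indices $i$ with $V_i<W$, i.e. the number of spread elements contained in $W$.

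Next I would control the intersections $W\cap V_i$. As $W$ and $V_i$ both lie inside the $4$-dimensional space $U$, the dimension formula gives
$$\dim(W\cap V_i)=\dim W+\dim V_i-\dim(W+V_i)\geq 3+2-4=1,$$
while trivially $\dim(W\cap V_i)\leq\dim V_i=2$. Hence every spread element meets $W$ in a $1$- or $2$-subspace, and $V_i<W$ holds precisely when $\dim(W\cap V_i)=2$.

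The crux is the count of nonzero vectors of $W$. Because the $V_i$ partition the nonzero vectors of $U$ and $W<U$, each nonzero vector of $W$ lies in exactly one $V_i$, and $V_i$ contributes $q^{\dim(W\cap V_i)}-1$ such vectors. Writing $m$ for the number of $i$ with $\dim(W\cap V_i)=2$ (so the remaining $q^2+1-m$ meet $W$ in a line), I would obtain
$$q^3-1=m(q^2-1)+(q^2+1-m)(q-1),$$
which collapses to $q(q-1)(m-1)=0$, hence $m=1$ since $q\geq 2$. This establishes the first assertion, namely that every $3$-subspace of $U$ contains exactly one element of ${\cal L}$.

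Finally, for the "in particular" clause I would argue by contradiction. If some $U\in C^2$ and $U'\in C^0$ were adjacent in $J_q(n,4)$, their intersection $W=U\cap U'$ would be a $3$-subspace with $W<U$; by the first part $W$ contains some $V\in{\cal L}$, and then $V<W<U'$ contradicts $U'\in C^0$. I expect the only genuinely delicate point to be the bound $\dim(W\cap V_i)\geq 1$: it is this forced nontrivial meeting of \emph{every} spread element with $W$ that makes the vector count rigid enough to pin down $m=1$, and the remaining steps are routine arithmetic.
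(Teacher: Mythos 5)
Your proof is correct, but it takes a genuinely different route from the paper's. The paper argues by contradiction via the $F'$-closure: if a $3$-subspace $W<U$ contained no element of ${\cal L}$, then every spread element would meet $W$ in at most a line, so the $F'$-closure of $W$ would contain at least $(q^3-1)(q+1)$ nonzero vectors and hence have dimension at least $5$, contradicting that it lies inside the $4$-dimensional $F'$-closed space $U$; uniqueness is then automatic, since two spread elements would span a $4$-space inside $W$. You instead work purely combinatorially inside $U$: taking as given the fact, established just before the lemma, that the nonzero vectors of $U\in C^2$ are partitioned by $q^2+1$ spread elements, you combine the dimension bound $\dim(W\cap V_i)\geq 1$ with a double count of the $q^3-1$ nonzero vectors of $W$, and the identity $q(q-1)(m-1)=0$ pins down $m=1$ in one stroke. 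Your arithmetic checks out, and your instinct about the delicate point is right: without $\dim(W\cap V_i)\geq 1$ the count alone admits spurious solutions (with $m_2$ spread elements inside $W$ one could have $q(m_2-1)$ elements missing $W$ entirely), so the dimension formula is genuinely load-bearing. What your approach buys is elementarity and generality: it shows that \emph{any} line spread of a $4$-dimensional space has exactly one member inside each $3$-subspace, with no reference to the field $F'$ once the partition is granted, and it delivers ``exactly one'' directly rather than through a separate at-least-one/at-most-one split. What the paper's approach buys is economy within its own toolkit: the $F'$-closure mechanism is developed anyway for the rest of the section and reappears in Lemma \ref{LemGr2} and in the description of $C^2$ as closures of $3$-subspaces. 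Both arguments handle the ``in particular'' clause identically: a common $3$-subspace of $U\in C^2$ and $U'\in C^0$ would force a spread element inside $U'$, a contradiction.
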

\begin{proof}
Let $W$ be a $3$-subspace of $U$ that does not contain subspaces from ${\cal L}$. Since $F_q<F'$ we see that the $F'$-closure of any of  nonzero vectors of $U$ meets $W$ in exactly $q-1$ nonzero vectors. We see that the $F'$-closure of $W$ has at least $(q^3-1)(q+1)$ vectors, so its  dimension is at least $5$. This contradicts that the subspace $U$ is $F'$-closed and that $W$ is a subspace of $U$. 

\end{proof}

 In view of Lemma \ref{LemGr1} one might consider $C^2$ to be the $F'$-closure of all $3$-subspaces that contain exactly one subspace from ${\cal L}$.
Indeed any such $3$-subspace is spanned by a subspace $X$ (which is $F'$-closed) from ${\cal L}$ and a $1$-subspace and the $F'$-closure of the latter one has dimension $2$.

\begin{lemma}\label{LemGr2}
Any subspace $U$, $U\in C^1$ is adjacent to exactly $q+1$ subspaces from $C^2$ in $J_q(n,4)$.
\end{lemma}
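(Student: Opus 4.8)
The plan is to set up an explicit bijection between the neighbors of $U$ in $C^2$ and a small family of $3$-subspaces of $U$, and then count the latter directly. Let $X$ be the unique subspace of ${\cal L}$ contained in $U$. If $U'\in C^2$ is adjacent to $U$ in $J_q(n,4)$, then $W:=U\cap U'$ is a $3$-subspace of both $U$ and $U'$. Since $W$ is a $3$-subspace of $U'\in C^2$, Lemma \ref{LemGr1} says that $W$ contains exactly one subspace from ${\cal L}$; this subspace lies in $U$, so by the definition of $C^1$ it must equal $X$. Hence every such $W$ satisfies $X<W<U$. First I would record that the number of $3$-subspaces $W$ with $X<W<U$ equals the number of $1$-subspaces of the quotient $U/X$, namely $[^2_1]_q=q+1$.

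The heart of the argument is to show that the intersection map $U'\mapsto U\cap U'$ and the $F'$-closure map are mutually inverse bijections between these $q+1$ subspaces and the neighbors of $U$ in $C^2$. Given a $3$-subspace $W$ with $X<W<U$, I would invoke the structural remark following Lemma \ref{LemGr1}: writing $W=\langle X,v\rangle$ with $v\notin X$, the $F'$-closure $\overline{W}$ of $W$ equals $X+Y$, where $Y$ is the spread subspace through $v$. Since $Y\neq X$ and the two meet only in $0$, this is a $4$-subspace, and being $F'$-closed and containing two subspaces of ${\cal L}$ it lies in $C^2$. Moreover $\overline{W}$ is the \emph{unique} $F'$-closed $4$-subspace containing $W$, because any $F'$-closed subspace containing $W$ contains its $F'$-closure, which already has dimension $4$. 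Thus each admissible $W$ determines a well-defined element $\overline{W}\in C^2$.

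It then remains to verify the two compositions. For $W\mapsto\overline{W}\mapsto U\cap\overline{W}$, the containments $W\le U\cap\overline{W}\le U$ together with $U\neq\overline{W}$ (as $U\in C^1$ while $\overline{W}\in C^2$) force $U\cap\overline{W}=W$, which in particular shows that $\overline{W}$ is adjacent to $U$. For $U'\mapsto W:=U\cap U'\mapsto\overline{W}$, since $U'$ is $F'$-closed and contains $W$ we get $\overline{W}\le U'$, and both are $4$-dimensional, so $\overline{W}=U'$. Hence the correspondence is a bijection and $U$ has exactly $q+1$ neighbors in $C^2$. The one point that needs care is the well-definedness of the closure step, i.e.\ that $\overline{W}$ has dimension exactly $4$ and lies in $C^2$; this is precisely where Lemma \ref{LemGr1} and the remark after it are used, so I expect the main work to be in confirming that $W$ meets ${\cal L}$ in the single subspace $X$ and that closing up $W$ adjoins exactly one further spread subspace.
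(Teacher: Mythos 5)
Your proposal is correct and follows essentially the same route as the paper: identify the unique spread member $X<U$, use Lemma \ref{LemGr1} to show every neighbor in $C^2$ meets $U$ in a $3$-subspace containing $X$, count the $q+1$ such $3$-subspaces, and pass back via $F'$-closures. The only difference is one of thoroughness—the paper leaves the mutual-inverse verification of the intersection and closure maps implicit, while you spell it out, which is a welcome tightening rather than a new idea.
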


\begin{proof}
Let $X\in {\cal L}$ be that such that $X<U$. Let $V\in C^2$ be adjacent to $U$, i.e. $dim(U\cap V)=3$. 
By Lemma \ref{LemGr1} the subspace $U\cap V$ of $U$ must contain $X$. There are exactly $q+1$ $3$-subspaces of $U$ that contain $X$. Their $F'$-closures are in $C^2$  and we obtain the required.
\end{proof}

\begin{theorem}\label{TheoR}
Let ${\cal L}$ be a Desarguesian $2$-spread. The code $\{U:U <F_q^n\mbox{, }dim(U)=4, |\{V \in {\cal L}: V<U\}|=0\}$ is completely regular  
in $J_q(n,4)$.

\end{theorem}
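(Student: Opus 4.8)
The plan is to reduce the statement to the second part of Theorem \ref{Theo1}, applied to the partition $\{C^0,C^1,C^2\}$, in exactly the way the proof of Theorem \ref{TheoSQS} proceeds. Two of the three hypotheses of that statement are already in hand: Lemma \ref{LemGr1} guarantees that there are no edges between $C^0$ and $C^2$, and Lemma \ref{LemGr2} shows that every subspace of $C^1$ is adjacent to precisely $q+1$ subspaces of $C^2$, so the constant $\beta_1=q+1$ exists. The only missing ingredient is an eigenvector of $J_q(n,4)$ whose associated partition is exactly $\{C^0,C^1,C^2\}$, and the whole proof hinges on producing it.

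To obtain this eigenvector I would lift one from the spread itself. Since ${\cal L}$ is a $2$-spread, i.e.\ a $1-(n,2,1)_q$-design, it is the case $k=2$ of a $(k-1)-(n,k,1)_q$-design; hence by Corollary \ref{Cor1} the spread ${\cal L}$ is a completely regular code in $J_q(n,2)$ with covering radius $1$ and eigenvalue $\theta_{2,q}(n,2)$, which is distinct from the valency. The first statement of Theorem \ref{Theo1} then yields an eigenvector $u$ of $J_q(n,2)$, taking two values $a\neq b$ on ${\cal L}$ and $\overline{\cal L}$ respectively, whose associated partition is $\{{\cal L},\overline{\cal L}\}$. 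Applying the inclusion map, Theorem \ref{TheoI} shows that $I_{4,2}u$ is an eigenvector of $J_q(n,4)$ with the same eigenvalue $\theta_{2,q}(n,2)$.

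It then remains to identify the partition associated to $I_{4,2}u$. For a $4$-subspace $U$ the entry $(I_{4,2}u)_U$ is the sum of the values of $u$ over the $2$-subspaces contained in $U$; writing $n_0(U)$ for the number of those $2$-subspaces that lie in ${\cal L}$ and using that the total number of $2$-subspaces of $U$ equals the constant $[^4_2]_q$, this entry equals $n_0(U)(a-b)+[^4_2]_q\,b$. Because $a\neq b$, the value depends injectively on $n_0(U)$, which the discussion preceding Lemma \ref{LemGr1} shows takes only the values $0$, $1$ and $q^2+1$. Hence $I_{4,2}u$ takes exactly three values and its associated partition is precisely $\{C^0,C^1,C^2\}$. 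With this eigenvector available, the two structural facts above let me invoke the second statement of Theorem \ref{Theo1} to conclude that $C^0$ is completely regular.

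I expect the only genuinely delicate point to be the verification that the partition associated to $I_{4,2}u$ is the three-part partition $\{C^0,C^1,C^2\}$ rather than a coarser one; this rests on the fact, already isolated in this section, that a $4$-subspace meets the Desarguesian spread in exactly $0$, $1$, or $q^2+1$ of its members, so the three fibres of $n_0$ are nonempty and yield three distinct sums. Everything else is a direct appeal to the machinery assembled in Theorems \ref{Theo1} and \ref{TheoI} together with Corollary \ref{Cor1}.
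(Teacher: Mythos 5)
Your proof is correct and follows essentially the same route as the paper's: both combine Lemmas \ref{LemGr1} and \ref{LemGr2} with the two-valued eigenvector of $J_q(n,2)$ associated to $\{{\cal L},\overline{\cal L}\}$ (via Corollary \ref{Cor1} and the first statement of Theorem \ref{Theo1}), lift it through the inclusion matrix $I_{4,2}$ using Theorem \ref{TheoI}, and then invoke the second statement of Theorem \ref{Theo1}. Your explicit computation that $(I_{4,2}u)_U = n_0(U)(a-b)+[^4_2]_q\,b$ takes exactly three distinct values, one for each of $n_0(U)\in\{0,1,q^2+1\}$, merely spells out a step the paper asserts without detail.
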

\begin{proof} The vertex set of $J_q(n,4)$ is parted into the codes
$C^0$, $C^1$ and $C^2$. By Lemmas \ref{LemGr1} and \ref{LemGr2} there are no edges between $C^0$ and $C^2$
and any vertex from $C^1$ is adjacent to exactly $6$ subspaces from $C^2$.

Since ${\cal L}$ is a completely regular code with covering radius $1$, we see that $\{{\cal L}_0, {\cal L}_1\}=\{{\cal L}, \overline{\cal L}\}$ is the partition associated to an eigenvector $v$ of $J_q(n,2)$ with eigenvalue $\theta_{2,q}(n,2)$. By Theorem \ref{TheoI} the vector $I_{4,2} v$ is an eigenvector of $J_q(n,4)$ with eigenvalue $\theta_{2,q}(n,4)$. The definition of the inclusion matrix $I_{4,2}$ and the definition of the codes $C^0$, $C^1$, $C^2$ imply that these codes form the partition associated to $I_{6,4} v$. The result follows from Theorem \ref{Theo1}.

\end{proof}

We note that following the proof of Theorem \ref{Theo1}  one can obtain that the code given in Theorem \ref{TheoR} has the intersection array
$$\{[^4_1]_q[^3_1]_q,q+1;  q^5(q+1)[^{n-6}_1]_q,[^4_1]_q[^{n-4}_1]_q q\}.$$

\section{Completely regular codes  with $\rho=1$ in $J_2(6,3)$ }


Throughout this section by {\it the automorphism group} of a code (subset of the vertices) in a graph we mean the setwise stabilizer of the code in the automorphism group of the graph.
Let $G$ be a subgroup of the automorphism group of a $m$-regular graph $\Gamma$. Let   
$O_1,\ldots,O_r$ be the orbits of the action of $G$ on the vertex set of $\Gamma$. Because $O_1,\ldots,O_r$ are orbits we see that given any $i$, $j\in\{1,\ldots,r\}$ any vertex $x$ of $O_i$ is adjacent to exactly $A_{ij}$ vertices of $O_j$
and $A_{ij}$ does not depend on $x$. Let $A$ be the matrix $\{A_{ij}\}_{i,j\in\{1,\ldots,r\}}$. Suppose the automorphism group of a code $C$ has  a subgroup $G$.
 We consider the characteristic vector $\chi_{C,G}$ of $C$ with positions indexed by the orbits $O_1,\ldots,O_r$, i.e. $(\chi_{C,G})_{i}=1$ if and only if $O_i\subseteq C$.
If ${\bf 1}$ is the all-one vector then ${\bf 1}-\chi_{C,G}$ is the characteristic vector of the complement of $C$.
 The notations above imply that 

\begin{equation}\label{eqCRC} A\chi_{C,G}=(m-\beta_0)\chi_{C,G} + \gamma_1({\bf 1}-\chi_{C,G})\end{equation}
holds if and only if $C$ is a completely regular code in $\Gamma$ with covering radius $1$, intersection array $\{ \beta_0; \gamma_1\}$ and $G$ is a subgroup of its automorphism group.

 One might consider (\ref{eqCRC}) to be a binary linear programming problem
with the binary variable vector $\chi_{C,G}$. We consider the application of the binary linear programming approach for showing the existence of completely regular codes in the Grassmann graph $G_2(6,3)$.

Let $a$ be a primitive element of $F_{2^6}$. We set $\Gamma_{21}$ to be the group generated by the multiplication of the vectors 
of $F_2^6$ (treated as the elements of $F_{2^6}$) by $a^{21}$. The vertices of $J_2(6,3)$ are parted into  $465$ orbits of the group $\Gamma_{21}$. 

Let $C$ be a completely regular code in the graph $J_2(6,3)$ with intersection array $\{\beta_0,\gamma_1\}$.
Due to Corollary \ref{integer} the eigenvalues of $C$ are  the valency of $J_2(6,3)$ which is $98$ and \begin{equation}\label{ev}98-\beta_0-\gamma_1=\theta_{i,2}(6,3),\end{equation}
where $i-1$ is the strength of $C$ as a design.

Let the eigenvalue $98-\beta_0-\gamma_1$ be $\theta_{2,2}(6,3)$, i.e. $C$ is a $q$-ary $1$-design.  In this case a binary linear programming solver found solutions of system (\ref{eqCRC}) for $8$ different values of $\gamma_1$.  Two of the codes  (with $\gamma_1=9$ and $21$) were previously obtained in \cite{Met}.

\begin{theorem}\label{TheoCRC63}
There are completely regular codes in $J_2(6,3)$ with covering radius $1$, intersection array $\{ 93-\gamma_1; \gamma_1\}$
for any $\gamma_1\in\{12,15,18,24,27,30\}$ such that $\Gamma_{21}$ is a subgroup of their automorphism groups.
\end{theorem}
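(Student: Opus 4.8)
The argument is computational: I would realise the defining relation (\ref{eqCRC}) as a system of integer linear equations in $0/1$ unknowns and hand one instance per target $\gamma_1$ to a binary linear programming solver. The point of prescribing the group $\Gamma_{21}$ is that a $\Gamma_{21}$-invariant code is determined by which of the $465$ orbits it contains, so the unknown is the $465$-dimensional orbit incidence vector $\chi_{C,\Gamma_{21}}$ rather than the full characteristic vector on all $1395$ vertices. Hence the first step is to build the orbit adjacency matrix $A=\{A_{ij}\}$ of $J_2(6,3)$ relative to $\Gamma_{21}$: an integer $465\times 465$ matrix whose $(i,j)$ entry is the number of neighbours in orbit $O_j$ of any fixed vertex of $O_i$.

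Before invoking the solver I would fix the admissible parameters. Since the desired codes are $q$-ary $1$-designs, their non-trivial eigenvalue is $\theta_{2,2}(6,3)$; the formula of Theorem \ref{TheoE} gives $\theta_{2,2}(6,3)=2^{3}\cdot 1\cdot 1-3=5$, and together with the valency $98$ this forces $\beta_0+\gamma_1=93$, so the intersection array is $\{93-\gamma_1;\gamma_1\}$ and the relation (\ref{eqCRC}) collapses to the clean feasibility problem
\[(A-5\,\mathrm{Id})\,\chi=\gamma_1\,{\bf 1},\qquad \chi\in\{0,1\}^{465}.\]
The integrality conditions of Corollary \ref{integer}, taken with strength $t=1$, demand that $[^5_2]_2\,\beta_0/93=155(93-\gamma_1)/93$ be an integer; as $155/93$ reduces to $5/3$, this holds exactly when $3\mid\gamma_1$, which already confines the candidate values to multiples of $3$ and explains why no other residues appear.

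The third step is to solve this $0/1$ feasibility problem for each $\gamma_1\in\{12,15,18,24,27,30\}$ and to exhibit one feasible vector $\chi$ in each case. By the equivalence recorded immediately after (\ref{eqCRC}), any such $\chi$ is the orbit incidence vector of a completely regular code $C$ in $J_2(6,3)$ with $\rho=1$ and intersection array $\{93-\gamma_1;\gamma_1\}$ whose automorphism group contains $\Gamma_{21}$. Since $0<\gamma_1<93$ forces every solution to be neither empty nor the whole vertex set (the orbit matrix of a regular graph satisfies $A\,{\bf 1}=98\,{\bf 1}$, so the full vector would require $\gamma_1=93$), each code is proper and its covering radius is genuinely $1$. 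I would close by substituting each returned vector back into (\ref{eqCRC}) to recheck feasibility independently of the solver, and by confirming the predicted sizes $|C|=15(93-\gamma_1)$ of Corollary \ref{integer} as a consistency test.

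The genuinely hard part is computational rather than conceptual: the program has $465$ binary variables, and the burden is to certify feasibility by producing an explicit witness $\chi$ for each of the six values of $\gamma_1$. Witnesses are easy to trust, since each can be re-verified by elementary arithmetic through (\ref{eqCRC}); the delicate points are the error-free construction of the orbit matrix $A$ and, if one wishes to present the six codes as new, checking that they are pairwise inequivalent and distinct from the codes with $\gamma_1\in\{9,21\}$ of \cite{Met}, which I would do by comparing their orbit profiles under the full automorphism group.
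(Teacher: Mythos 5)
Your proposal is essentially identical to the paper's own proof: the author prescribes the cyclic group $\Gamma_{21}$, reduces equation (\ref{eqCRC}) to a binary feasibility problem on the $465$ orbits with the eigenvalue $\theta_{2,2}(6,3)=5$ fixed (so $\beta_0+\gamma_1=93$), and reports that a binary linear programming solver produced solutions for exactly these values of $\gamma_1$, which is your plan step for step. One small caveat: your consistency check $|C|=15(93-\gamma_1)$ inherits a typo from Corollary \ref{integer}; by the double-counting in Proposition \ref{Prop1} the size is $|C|=15\gamma_1$, though this swap of $\beta_0$ and $\gamma_1$ affects neither the divisibility condition $3\mid\gamma_1$ nor the feasibility argument.
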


From (\ref{ev}) we see that the intersection array $\{\beta_0,\gamma_1\}$ of any completely regular code could be found from the strength of the code and  $\gamma_1$.
 We summarize the information on the intersection arrays of the completely regular codes in $J_2(6,3)$ with $\rho=1$  in Table $1$.   By integer conditions in the table we mean the  integer necessary existence conditions for designs imposed by the second statement of Corollary \ref{integer}.

\begin{table}[h!]\caption{\normalsize Completely regular codes  in $J_2(6,3)$ with $\rho=1$. The intersection array $\{\beta_0,\gamma_1\}$ of any completely regular code is obtained from its strength and  $\gamma_1$ using  (\ref{ev})}
\label{TableTR15}
\begin{center}
\noindent\begin{tabular}{|c|c|c|c|c|c|}
  \hline
  {\small Eigen-} &{\small Design}&{\small Integer  }&{\small Nonexis-,}&Existence,&Open cases,\\
  {\small value} &{\small strength}&{\small  conditions}&tence, $\gamma_1$& $\gamma_1$& $\gamma_1$\\

   \hline
  $35$ & $0$ & $\gamma_1$ \mbox{ mod }$7=0$& & $7^H, 14^{HP}$  & $21, 28$    \\
\hline
  $5$ & $1$ & $\gamma_1$ \mbox{ mod }$3=0$& $3^{M''}$ & $9^M, 21^{M'}$,  & $3l$, $l\in \{2\}\cup $     \\
	&&&&   $ 3l, l=4,5,6$ & $\{10,\ldots,15\}$\\
	&&&&   $ 8,9,10^A$ & \\\hline
  $-7$ & $2$ & $\gamma_1$ \mbox{ mod }$21=0$& & $21^B, 42^B$ &   \\
  \hline
 \end{tabular}

 \end{center}
\bigskip

$^H$ subspaces belonging to a  hyperplane;

$^{HP}$ subspaces are in a hyperplane $H$ or contain a vector $v$, where $v\notin H$;

$^A$ exists by Theorem \ref{TheoCRC63};

$^{B}$ correspond to  $2-(6,3,3)_2-$ and $2-(6,3,6)_2$-designs, which  exist by \cite{Brau};

$^M$ totally isotropic subspaces
of a symplectic polarity \cite[Example 6]{Met};

$^{M'}$ $3$-subspaces that do not contain subspaces from a $2$-spread;

\noindent \cite[Example 5]{Met}, see also Theorem \ref{Corol2};

$^{M''}$ nonexistence follows from \cite[Lemma 21]{Met}.

 \end{table}

{\bf Remark 3.} Apart from the integer necessary conditions there are other techniques for proving the nonexistence of a completely regular code with $\rho=1$ given a putative intersection array. One of them is a method based on the finding weight distribution of the code \cite[Theorem 1]{AvgMog2} (see also \cite{Kro}). In the case of Johnson and Hamming graphs  some intersection arrays feasible by integer necessary conditions were shown to be infeasible using the weight distribution method  \cite{KTV}, \cite{Kro}. However, in the particular instance of the graph $G_2(6,3)$ this approach is not stronger than the integer necessary conditions. A counting argument in \cite[Lemma 21]{Met} implies the nonexistence of the completely regular codes with strength $1$ and $\gamma_1=3$ in this graph.
  
A very intriguing open problem is solving the existence problem for completely regular code in $G_2(6,3)$ with intersection arrays $\{ 56;21\}$ and $\{64 ;28\}$.  These codes are  designs of strength $0$ and therefore could be viewed as
a generalization of Cameron-Liebler line classes in the non-strongly regular case. 

The only known completely regular codes in $G_2(6,3)$ with $\rho=2$ are the code of the subspaces containing a fixed $2$-space and $3$-spread in $G_2(6,3)$ (Corollary \ref{Cor1}).



\begin{thebibliography}{8}

\bibitem{AvgMog} S.V. Avgustinovich, I.Yu. Mogilnykh, Induced perfect colorings, Sib. Electron. Math. Rep. 8 (2011) 310-316.

\bibitem{AvgMog2} S. V. Avgustinovich and I. Yu. Mogilnykh. Perfect 2-colorings of Johnson graphs J(6, 3) and J(7, 3).
In  A. Barbero, editor, Coding Theory and Applications (Second International Castle Meeting, ICMCTA
2008, Castillo de la Mota, Medina del Campo, Spain, September 15-19, 2008. Proceedings),
7, volume 5228 of Lect. Notes Comput. Sci., pages 11--19. Springer-Verlag, Berlin Heidelberg, 2008.

\bibitem{AP} S.V. Avgustinovich, V.N. Potapov, Combinatorial designs, difference sets, and bent functions as perfect colorings of graphs and multigraphs, Siberian Mathematical Journal,  Vol. 61, No. 5, pp. 867-877, 2020.



\bibitem{Bam} J. Bamberg, There is no Cameron--Liebler line class
of $PG(3,4)$ with parameter 6, 
http://symomega.wordpress.com/2012/04/01/there-is-no-


cameron-liebler-line-class-of-pg34-with-parameter-6

\bibitem{Brau} M. Braun, A. Kerber, R. Laue, Systematic Construction of q-Analogs
of t-(v, k, l)-designs, Designs, Codes and Cryptography, 34, 55--70, 2005
\bibitem{Brau2}
M. Braun, T. Etzion, P. R. J. Ostergard, A. Vardy, and A. Wassermann. Existence of q-analogs of Steiner systems. E-print 1304.1462, arXiv.org, 2013. Available at
http://arxiv.org/abs/1304.1462
\bibitem{CL} P.J. Cameron, R.A. Liebler, Tactical decompositions and orbits of projective groups, Linear Algebra Appl. 46 (1982) 91--102.
\bibitem{CDZ} D.M. Cvetkovic, M. Doob, H. Sachs, Spectra of Graphs, Academic Press, New York, London, 1980.

\bibitem{Chi} L. Chihara, On the zeros of the Askey-Wilson polynomials, with applications to coding theory, SIAM J. Math. Anal., vol. 18, no. 1,
pp. 191--207, 1987.

\bibitem{Del} P. Delsarte, An algebraic approach to the association schemes of coding theory, Philips Res. Rep. Suppl. 10 (1973) 1-97.
\bibitem{DeBruSuz} Bart De Bruyn and Hiroshi Suzuki. Intriguing sets of vertices of regular graphs.
Graphs Combin., 26(5):629--646, 2010.
\bibitem{Etz} Etzion T. and Schwarz M., Perfect Constant-Weight Codes, IEEE Trans. Inform.
Theory, Vol. 50, No. 9, 2156-2165, 2004.

\bibitem{FDF} D. G. Fon-Der-Flaass,  Perfect 2-colorings of a
hypercube. Siberian Mathematical Journal. 48(4) (2007) 740-745.


\bibitem{Gord} Gordon M.D., Perfect Single Error-Correcting Codes in the Johnson Scheme, IEEE
Trans. Inform. Theory, Vol. 52, No. 10, 4670-4672 , 2006.

\bibitem{Kant}  Kantor W. M., On incidence matrices of finite projective and affine space, Math, Z. i24, 315-318, 1972

\bibitem{Kro} D. S. Krotov. On weight distributions of perfect colorings and completely regular codes.
Des. Codes Cryptography, 61(3):315-329, 2011


\bibitem{Nor} C. Maldonado and D. Penazzi. Lattices and Norton algebras of Johnson, Grassmann and Hamming
graphs. E-print 1204.1947, arXiv.org, 2012. Available at http://arxiv.org/abs/1204.1947
\bibitem{KMV} D. S. Krotov, I. Yu. Mogilnykh, A. Yu. Vasil'eva, On completely regular codes of covering radius 1
in the halved hypercubes, E-print 1812.03159, arXiv.org, 2018. Available at http://arxiv.org/abs/1812.03159 
\bibitem{KTV} E. A. Bespalov, D. S. Krotov, A. A. Matiushev, A. A. Taranenko, K. V. Vorob'ev, Perfect 2-colorings of Hamming graphs,  E-print 1911.13151, arXiv.org, 2020. Available at http://arxiv.org/abs/1911.13151

\bibitem{Rifa} J. Borges, J. Rifa, V.A. Zinoviev, On completely regular codes, Probl. Inf. Transmiss. 55 (2019) 1--45.
\bibitem{Mar} W.J. Martin, Completely regular designs, J. Combin. Des. 4 (1998) 261--273.
\bibitem{MarS1} W.J. Martin, Completely regular designs of strength one, Journal of Algebraic Combinatorics,
3:2 (1994), 177--185. MR1268574
\bibitem{MarP} W. J. Martin Completely regular subsets Ph.D. Thesis University
of Waterloo. 1992.
\bibitem{Mey}  A. Meyerowitz, Cycle-balanced conditions for distance-regular graphs, Discrete Math. 264 (3) (2003) 149--166.
\bibitem{Met} S. De Winter, K. Metsch Perfect 2-Colorings of the Grassmann Graph of Planes, Electronic Journal of Combinatorics, Volume 27, Issue 1, P1.21 (2020) 
\bibitem{Mog} Mogilnykh I.Yu. On the regularity of perfect 2-colorings of the Johnson graph.
Probl. Inform. Transm. 43(4), 271-277 (2007)
\bibitem{MogG}  A.L. Gavrilyuk,  I.Y. Mogilnykh. Cameron-Liebler line classes in PG(n,4). Des. Codes Cryptogr. 73(3), 969--982 (2014).
\bibitem{MogVal} Mogilnykh, I.  Valyuzhenich A. Equitable 2-partitions of the Hamming graphs with the second eigenvalue.
  Discrete Mathematics. 343, 11, 112039, 2020.
\bibitem{Neu} A. Neumaier, Completely regular codes, Discrete Mathematics
V 106--107, Issue 1, September 1992, P 353-360

\bibitem{Vor} K. Vorob'ev, Equitable 2-partitions of Johnson graphs with the second eigenvalue.  E-print 2003.10956, 2020.   Available at http://arxiv.org/abs/2003.10956



\end{thebibliography}
\end{document}